\def\RR{{\bf R}}
\def\ZZ{{\bf Z}}
\def\bigmid {\ \left|{\Large \strut}\right.}
\def\11{{\bf 1}}
\numberwithin{equation}{section}
\newcommand{\val}{\mathop{\rm val}}
\newtheorem{Thm}{Theorem}[section]
\newtheorem{Prop}[Thm]{Proposition}
\newtheorem{Lem}[Thm]{Lemma}
\newtheorem{Cor}[Thm]{Corollary}
\theoremstyle{definition}
\newtheorem{Rem}[Thm]{Remark}
\title{On tight spans and tropical polytopes for directed distances}
\author{Hiroshi HIRAI \\
        Research Institute for Mathematical Sciences, \\
        Kyoto University, Kyoto 606-8502, Japan  \\
        \texttt{\normalsize hirai@kurims.kyoto-u.ac.jp}
        \\ \\
        Shungo KOICHI \\
        Department of Systems Design and Engineering,  \\
        Nanzan University, Seto 489-0863, Japan  \\
        \texttt{\normalsize shungo@nanzan-u.ac.jp}}
\date{April 2010}
\begin{document}

\maketitle

\begin{abstract}
An extension $(V,d)$ of a metric space $(S,\mu)$ 
is a metric space with $S \subseteq V$ and $d|_S = \mu$, 
and is said to be tight if 
there is no other extension $(V,d')$ of $(S,\mu)$ with $d' \leq d$. 
Isbell and Dress
independently found that every tight extension 
is isometrically embedded into 
a certain metrized polyhedral complex 
associated with $(S,\mu)$, called the tight span.
This paper develops an analogous theory 
for directed metrics, 
which are ``not necessarily symmetric''
distance functions satisfying the triangle inequality. 
We introduce a directed version of the tight span
and show that it has 
such a universal embedding property 
for tight extensions.
Also we newly introduce another natural class of extensions, 
called cyclically tight extensions, 
and show that (a fiber of) the tropical polytope, 
introduced by Develin and Sturmfels, 
has a universal embedding property 
for cyclically tight extensions.
As an application, 
we prove the following directed version of tree metric theorem:
directed metric $\mu$ is a directed tree 
metric if and only if 
the tropical rank of $-\mu$ is at most two.
Also we describe how tight spans and tropical polytopes 
are applied to the study in multicommodity flows 
in directed networks.
\end{abstract}

\section{Introduction}
\label{sec:Introduction}
An {\em extension} $(V,d)$ of a metric space $(S,\mu)$ 
is a metric space with $S \subseteq V$ and $d|_S = \mu$, 
and is said to be {\em tight} if 
there is no other extension $(V,d')$ of $(S,\mu)$ with $d' \leq d$. 
Isbell~\cite{Isbell64} and Dress~\cite{Dress84} 
independently found that
there exists a {\em universal} tight extension 
with property that it contains every tight extension as a subspace.
The universal tight extension is realized 
by the set of minimal elements 
of the polyhedron
\[
 \{ p \in \RR^S \mid p(s) + p(t) \geq \mu(s,t)\ (s,t \in S) \}
\]
in $(\RR^S,l_{\infty})$, 
which is called the {\em injective hull} or
the {\em tight span}.
The tight span reflects, in geometric way, 
several hidden combinatorial properties of $\mu$.
For example, $\mu$ is a tree metric 
if and only if its tight span is a tree (one-dimensional contractible complex); 
see \cite[Theorem 8]{Dress84}.
This property reinterprets 
the classical {\em tree metric theorem}~\cite{Buneman74, Pereira69, Zareckii65}, 
and has a significance 
in the phylogenetic tree reconstruction 
in the biological sciences.
Tight spans also arise in quite different contexts, 
including the {\em $k$-server problem} in computer sciences~\cite{CL94} 
and the {\em multicommodity flows problems} 
in combinatorial optimization~\cite{Hirai09, Kar98EJC}.
In applications to both areas, 
the geometry of tight spans and 
the above-mentioned universality play crucial roles.

The goal of this paper is to develop 
an analogous theory of tight spans for directed distances and metrics.
A {\em directed distance} $d$ on a set $V$ is
a nonnegative real-valued 
function $d: V \times V \to \RR_+$ 
having zero diagonals $d(x,x) = 0$ $(x \in V)$.
A directed distance $d$ is said to be 
a {\em directed metric} if it satisfies 
the triangle inequalities $d(x,y) + d(y,z) \geq d(x,z)$ $(x,y,z \in V)$.
In both definitions, 
we do not impose the symmetric condition $d(x,y) = d(y,z)$.
A pair $(V,d)$ of a set $V$ and its directed metric $d$ 
is called a {\em directed metric space}.
We can define the concept of tight extensions in the same way.
The motivation for our theory comes from the following natural questions:
\begin{itemize}
\item {\em Does there exist the universal tight extension?} 
\item {\em If exists, does its geometry characterize 
a class of combinatorial distances, 
related to directed versions of tree metrics?}
\item {\em Are there reasonable applications to 
the study of multicommodity flows in directed networks?}
\end{itemize}
The main contribution of this paper
answers these questions affirmatively. 
In particular a directed analogue of the tight span does exist.
Furthermore we newly 
introduce another natural class of extensions, 
called {\em cyclically tight extensions.}
Interestingly, it has an unexpected connection to
the theory of {\em tropical polytopes}, 
developed by Develin and Sturmfels~\cite{DS04}.
In fact, 
we show that the tropical polytope 
generated by the distance matrix of $\mu$
has the universal embedding property for cyclically tight extensions.
In Section~\ref{sec:tightspans},
for a directed distance $\mu$, 
we define a directed analogue $T_{\mu}$ of the tight span 
(the {\em directed tight span}) as 
the set of minimal points in 
an unbounded polyhedron $P_{\mu}$ associated with $\mu$.
The tropical polytope is the projection $\bar Q_{\mu}$ of 
the set $Q_{\mu}$ of minimal points in 
another polyhedron associated with $\mu$.
We introduce 
a well-behaved {\em section}, 
a subset in $Q_{\mu}$ projected bijectively into $\bar Q_{\mu}$, 
called a {\em balanced section}.
We endow $T_{\mu}$, $Q_{\mu}$, and any balanced section $R$ 
with a directed analogue of the $l_{\infty}$-metric.
We investigate the behaviors of 
these directed metric spaces, 
which is placed in the main body of Section~\ref{sec:tightspans}. 
Then we prove the universal embedding properties 
that every tight extension of 
a metric $\mu$ is 
isometrically embedded into the tight span $T_{\mu}$, 
and every cyclically tight extension is isometrically embedded 
into a balanced section of $Q_{\mu}$ (Theorem~\ref{thm:extension}).

In Section~\ref{sec:dim1},
as an application of the embedding property
and the dimension criteria, 
we prove that some classes of distances 
realized by {\em oriented trees}
can be characterized by one-dimensionality 
of tight spans and tropical polytopes (Theorems~\ref{thm:path} and \ref{thm:tree}).
In particular, we prove the following
directed version of the tree metric theorem:
{\em a directed metric $\mu$ is a directed tree metric if and only if 
the tropical rank of $-\mu$ is at most $2$} (Corollary~\ref{cor:treemetric}). 
This complements the argument in \cite[Section 5]{DS04}.

In Section~\ref{sec:multiflows}, 
we briefly sketch 
how tight spans and tropical polytopes are applied to 
the study of directed multicommodity flow problems.
We prove that the linear programming dual 
to multicommodity flow problems reduces 
to {\em facility location problems} on tight spans and tropical polytopes 
(Theorems~\ref{thm:Tdual} and \ref{thm:Qdual}).
Further study 
of this approach
will be given in the next paper~\cite{HK}.

\paragraph{Notation.}
The sets of real numbers and nonnegative real numbers
are denoted  by $\RR$ and $\RR_{+}$, respectively.
For a set $X$, 
the sets of functions 
from $X$ to $\RR$ and from $X$ to $\RR_+$ are 
denoted by $\RR^{X}$ and $\RR_{+}^X$, respectively.
For a subset $Y \subseteq X$,
the characteristic function $\11_{Y} \in \RR^{X}$ 
is defined by 
$\11_Y(x) = 1$ for $x \in Y$
and $\11_{Y}(x) = 0$ for $x \notin Y$.
We particularly 
denote by $\11$ 
the all-one function $\11_X$ in $\RR^{X}$.
The singleton set $\{ s \}$ is often denoted by $s$, 
such as $\11_s$ instead of $\11_{\{s\}}$. 
For $p,q \in \RR^{X}$, $p \leq q$ means 
$p(x) \leq q(x)$ for each $x \in X$, and $p < q$ means 
$p(x) < q(x)$ for each $x \in X$.
For $p \in \RR^{X}$, $(p)_{+}$ is a point in $\RR^X$ defined by
$((p)_{+})(x) = \max\{ p(x),0 \}$ for $x \in X$.
For a set $P$ in $\RR^X$, 
a point $p$ in $P$ is said to be {\em minimal} 
if there is no other point $q \in P \setminus p$ with $q \leq p$.

In this paper, a directed metric (distance) 
is often simply called a metric (distance).
A metric (distance) in the ordinary sense is particularly 
called 
an {\em undirected metric} ({\em undirected distance}).
We use the terminology of undirected metrics in an analogous way
(we do not know any reference including a systematic 
 treatment for directed metrics).
For two directed metric spaces $(V,d)$ and $(V',d')$,
an {\em isometric embedding} from $(V,d)$ to $(V',d')$
is a map $\rho: V \to V'$ satisfying $d'(\rho(x),\rho(y)) = d(x,y)$
for all (ordered) pairs $x,y \in V$.
For a directed metric $d$ on $V$, and two subsets $A,B \subseteq V$, 
let $d(A,B)$ denote the minimum distance from $A$ to $B$:
\[
d(A,B) = \inf \{ d(x,y) \mid (x,y) \in A \times B\}.
\]
In our theory, the following directed metric 
$D_{\infty}^+$ on $\RR^X$ is particularly important: 
\[
D_{\infty}^+(p,q) = \|(q- p)_+ \|_{\infty} 
\quad ( = \max_{x \in X} (q - p)_+ (x) ) \quad (p,q \in \RR^X).
\]

For an directed or undirected graph $G$, 
its vertex set and edge set 
are denoted by $VG$ and $EG$, respectively.
If directed, an edge with tail $x$ and head $y$ 
is denoted by $xy$.
If undirected, we do not distinguish $xy$ and $yx$.

\section{Tight spans and tropical polytopes}
\label{sec:tightspans}

In this section, we introduce and study
the tight span and the tropical polytope
of a directed distance.
Let $\mu$ be a directed distance on a finite set $S$.
Let $S^c$ and $S^r$ be copies of $S$.
For an element $s \in S$, 
the corresponding elements 
in $S^c$ and $S^r$ are denoted by $s^c$ and $s^r$,
respectively.
We denote $S^c \cup S^r$ by $S^{cr}$.
For a point $p \in \RR^{S^{cr}}$, 
the restrictions of $p$ to $S^c$ and $S^r$
are denoted by $p^c$ and $p^r$, respectively, i.e., $p = (p^c, p^r)$.
We define the following polyhedral sets:
\begin{eqnarray*}
{\mit\Pi}_{\mu} & = & \{  p \in \RR^{S^{cr}}
\mid p(s^c) + p(t^r) \geq \mu(s,t) \; (s,t \in S) \}. \\
P_{\mu} & = &  {\mit\Pi}_\mu \cap \RR_{+}^{S^{cr}}. \\
T_{\mu} & = & \mbox{the set of minimal elements of } P_{\mu}. \\
Q_{\mu} & = &  \mbox{the set of minimal elements of } {\mit\Pi}_{\mu}. \\
Q^+_{\mu} &=& 
Q_\mu \cap \RR^{S^{cr}}_{+}.
\end{eqnarray*}
We call $T_{\mu}$ the {\em directed tight span}, 
or simply, the {\em tight span}. 
The polyhedron ${\mit\Pi}_\mu$ 
has the linearity space $(\11,-\11)\RR$. 
The natural projection of vector $v \in \RR^{S^{cr}}$ 
to $\RR^{S^{cr}}/(\11,-\11)\RR$
is denoted by $\bar v$.
The projection $\bar Q_{\mu}$ of $Q_{\mu}$ 
coincides with the {\em tropical polytope} 
generated by $S$ by $S$ matrix 
$( - \mu(s,t) \mid s,t \in S)$; 
see Develin and Sturmfels~\cite{DS04}.
We note the following relation:
\[
\begin{array}{ccccccc}
Q_\mu & \supseteq & Q^+_{\mu} & \subseteq & T_{\mu} & \subseteq & P_{\mu} \\
\downarrow & \swarrow &    &  &  & & \\
\bar Q_\mu & & & & & & 
\end{array}
\]
Here the arrow means the projection.
In the inclusions, $Q^+_{\mu}$ is a subcomplex of $T_{\mu}$, 
and $T_{\mu}$ is a subcomplex of $P_{\mu}$.

A subset $R \subseteq Q_{\mu}$ is called 
a {\em section} if the projection $p \in R \mapsto \bar p$ is bijective.
A subset $R \subseteq \RR^{S^{cr}}$ is 
said to be {\em balanced} if there is no pair $p,q$ of points in $R$
such that $p^c < q^c$ or $p^r < q^r$.
We are mainly interested in balanced sections.
We metrize these subsets as follows.
Let $D_{\infty}$ be a directed metric on $\RR^{S^{cr}}$ defined as
\[
D_{\infty}(p,q) 
 =  \max\{ D_{\infty}^{+} (p^c,q^c),\; D_{\infty}^{+}(q^r, p^r) \}  \quad (p,q \in \RR^{S^{cr}}). \nonumber 
\]
As subspaces of $(\RR^{S^{cr}}, D_{\infty})$, 
we obtain directed metric spaces 
$(T_{\mu},D_{\infty})$, $(Q_{\mu}, D_{\infty})$,  $(Q^+_{\mu}, D_{\infty})$, 
and $(R, D_{\infty})$ for any balanced section $R \subseteq Q_{\mu}$.
Figure~\ref{fig:Qmu} illustrates those subsets 
for all-one distance $\mu$ on a 3-set $S = \{s,t,u\}$.
In this case, $T_{\mu} = Q_{\mu}^+$ (accidentally), 
$Q_{\mu}$ is the union of three infinite strips with a common side, 
$Q_{\mu}^+$ is a {\em folder} consisting of three triangles, 
and $\bar Q_{\mu}$ is a star of three leaves.
Here $\mu_s$ is a vector in $\RR^{S^{cr}}$ 
consisting of the $s$-th column and the $s$-th row of $\mu$; 
see Section~\ref{subsec:embedding} for definition.
\begin{figure}
\center
\epsfig{file=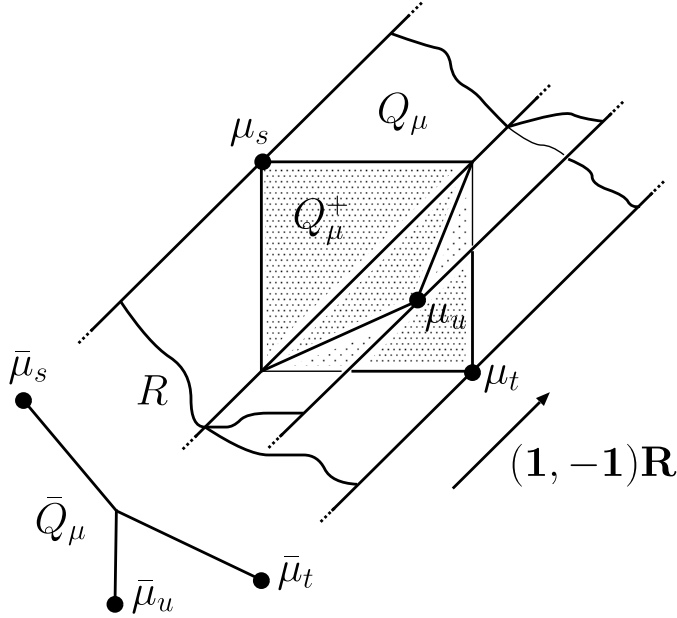,width=0.6\hsize}
\caption{$Q_\mu$, $Q_\mu^+$, and $\bar Q_\mu$}
\label{fig:Qmu}
\end{figure}

The rest of this section is 
organized as follows.
In the next Section~\ref{subsec:basic}, 
we present basic properties of these polyhedral sets.
In Section~\ref{subsec:nonexpansive}, 
we prove the existence of 
certain {\em nonexpansive retractions} 
among them.
This is the most substantial part of this section.
In Section~\ref{subsec:embedding}, 
we show that $T_{\mu}, Q_{\mu}, Q_{\mu}^+$, and any balanced section
are {\em geodesic} in the {\em directed} sense.
Also we show that distance $\mu$ 
is {\em embedded into} them.
In Section~\ref{subsec:extension},
we introduce notions of tight extensions and 
cyclically tight extensions of a directed metric space $(S,\mu)$,  
and prove that $T_{\mu}$ and $Q_{\mu}^+$ 
have the universal embedding properties 
for tight extensions and cyclically tight extensions, respectively. 
In Section~\ref{subsec:dimension}, we present dimension criteria 
for $T_{\mu}$ and $\bar Q_{\mu}$.
In the sequel, $\mu$ is supposed to be 
a distance on a finite set $S$, without noted.

\subsection{Basic properties}\label{subsec:basic}
Here
we briefly summarize some fundamental properties of 
$T_{\mu}$, $Q_{\mu}$, and $Q_{\mu}^+$. 
For a point $p \in \RR^{S^{cr}}$,
we denote by $K(p)$ the
bipartite graph with bipartition $(S^c,S^r)$ and
edge set 
$EK(p) = \{ s^ct^r \mid s^c \in S^c, t^r \in S^r, p(s^c) + p(t^r) = \mu(s,t) \}$.
The minimality of $p$ 
can be rephrased in terms of $K(p)$ as follows.
\begin{Lem}\label{lem:minimality}
\begin{itemize}
\item[{\rm (1)}]
A point $p$ in $P_{\mu}$ belongs to
$T_{\mu}$
if and only if
$K(p)$ has no isolated vertices $u$ with $p(u) > 0$.
\item[{\rm (2)}]
A point $p$ in ${\mit\Pi}_{\mu}$ belongs to $Q_{\mu}$
if and only if $K(p)$ has no isolated vertices.
\end{itemize}
\end{Lem}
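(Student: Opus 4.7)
The plan is to characterize isolated vertices in $K(p)$ in terms of slack, then argue by one-coordinate perturbation in each direction. Specifically, a vertex $u \in S^{cr}$ is isolated in $K(p)$ precisely when every constraint of ${\mit\Pi}_\mu$ involving the coordinate $p(u)$ is strict, i.e. $p(s^c) + p(t^r) > \mu(s,t)$ whenever $u \in \{s^c,t^r\}$. I will use this equivalence throughout.

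For the ``only if'' direction of (2), suppose $p \in Q_\mu$ but $u$ is an isolated vertex of $K(p)$. Since $S$ is finite, there are only finitely many constraints involving $u$, so the minimum slack among them is some $\varepsilon > 0$. Then $p - \varepsilon \11_{\{u\}}$ still lies in ${\mit\Pi}_\mu$ (constraints not involving $u$ are unaffected, and those involving $u$ lose at most $\varepsilon$ of slack), but it is strictly below $p$, contradicting minimality. For (1), the same argument works except that we must ensure the perturbed point remains in $\RR_+^{S^{cr}}$; this is why we additionally need $p(u) > 0$, and we take $\varepsilon \le p(u)$.

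For the ``if'' direction of (2), assume $K(p)$ has no isolated vertex and suppose for contradiction that some $q \in {\mit\Pi}_\mu$ satisfies $q \le p$ and $q \neq p$. Pick $u$ with $q(u) < p(u)$. By hypothesis $u$ has a neighbor $v$ in $K(p)$, so the corresponding constraint is tight: writing it as $p(u) + p(v) = \mu(s,t)$ for the appropriate $(s,t)$, we get
\[
q(u) + q(v) \le q(u) + p(v) < p(u) + p(v) = \mu(s,t),
\]
violating $q \in {\mit\Pi}_\mu$. For the ``if'' direction of (1), assume no isolated vertex of $K(p)$ has positive value, and suppose $q \in P_\mu$ with $q \le p$, $q \neq p$. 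Choose $u$ with $q(u) < p(u)$; since $q(u) \ge 0$ we must have $p(u) > 0$, so by hypothesis $u$ is not isolated in $K(p)$, and the same inequality chain yields a contradiction.

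No step is really an obstacle; the only subtlety is keeping track of the nonnegativity constraint in (1), which is exactly why the characterization there has the extra ``$p(u) > 0$'' clause. Both directions then rest on the standard ``tight constraint plus adjacent coordinate drop'' argument.
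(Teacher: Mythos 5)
Your proof is correct; the paper states this lemma without proof (it is presented as a direct rephrasing of minimality in terms of the tight-constraint graph $K(p)$), and your one-coordinate perturbation argument is exactly the intended one. One minor wording nit: $p - \varepsilon\11_{\{u\}}$ is not ``strictly below'' $p$ coordinatewise, but it is $\le p$ and distinct from $p$, which is all that the paper's definition of minimality requires, so the argument stands.
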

The next lemma says that
the projections of $T_{\mu}$ and of $Q_{\mu}$ 
to $\RR^{S^c}$ (or $\RR^{S^r}$) are isometries. 
So we can consider $T_\mu$, $Q_\mu$, and $Q_{\mu}^+$ 
in $(\RR^{S^c}, D_{\infty}^{+})$.
\begin{Lem}\label{lem:mutual}
\begin{itemize}
\item[{\rm (1)}]
$D_{\infty}(p,q) = D_{\infty}^+(p^c, q^c) = D_{\infty}^{+}(q^r, p^r)$ 
for $p,q \in T_{\mu}$ and for $p,q \in Q_{\mu}$.
\item[{\rm (2)}]
For $p,q \in Q_{\mu}$, $p^c < q^c$ if and only if $p^r > q^r$. 
\end{itemize}
\end{Lem}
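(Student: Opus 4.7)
The plan is to derive both parts directly from the characterization of minimality in Lemma~\ref{lem:minimality}, using the defining inequality $p(s^c)+p(t^r)\ge\mu(s,t)$ as the bridge between the $c$- and $r$-coordinates.

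For part~(2), suppose $p^c<q^c$ with $p,q\in Q_\mu$. For fixed $t\in S$, minimality of $q$ (Lemma~\ref{lem:minimality}(2)) gives $s\in S$ with $s^c t^r\in EK(q)$, i.e., $q(s^c)+q(t^r)=\mu(s,t)$; combined with the $\Pi_\mu$-constraint $p(s^c)+p(t^r)\ge\mu(s,t)$ and $p(s^c)<q(s^c)$, this forces $p(t^r)>q(t^r)$. Varying $t$ yields $p^r>q^r$. The reverse implication follows by the symmetric argument, using minimality of $p$ to produce a tight edge of $K(p)$ incident to each $s^c$.

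For part~(1), set $\alpha=D_\infty^+(p^c,q^c)$ and $\beta=D_\infty^+(q^r,p^r)$; since $D_\infty(p,q)=\max\{\alpha,\beta\}$, it suffices to show $\alpha=\beta$. Assume $\alpha>0$ and choose $s^*\in S$ attaining $q^c(s^*)-p^c(s^*)=\alpha$. In the $Q_\mu$ case, Lemma~\ref{lem:minimality}(2) produces $t^*\in S$ with $s^{*c}t^{*r}\in EK(q)$; in the $T_\mu$ case one has $q(s^{*c})>p(s^{*c})\ge0$, so Lemma~\ref{lem:minimality}(1) still applies to yield such a $t^*$. Substituting the tight identity $q(s^{*c})+q(t^{*r})=\mu(s^*,t^*)$ into the $\Pi_\mu$-constraint for $p$ gives $p(t^{*r})-q(t^{*r})\ge\alpha>0$, whence $\beta\ge\alpha$. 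The symmetric argument (using minimality of $p$ at a $t^*$ realizing $\beta>0$) yields $\alpha\ge\beta$ whenever $\beta>0$, and these one-sided bounds rule out $\alpha\ne\beta$ in every case.

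The main subtlety is that for $T_\mu$ the minimality in Lemma~\ref{lem:minimality}(1) only forbids isolated vertices of strictly positive value; the argument handles this automatically, because the extremal index $s^*$ satisfying $q^c(s^*)-p^c(s^*)=\alpha>0$ carries $q(s^{*c})>p^c(s^*)\ge0$, and likewise $p(t^{*r})>0$ at any index realizing $\beta>0$, so the required tight edges in $K(q)$ and $K(p)$ exist precisely where they are needed.
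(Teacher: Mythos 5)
Your argument is correct and follows the paper's own proof essentially step for step: both use Lemma~\ref{lem:minimality} to extract a tight edge at the extremal (or arbitrary) coordinate, combine it with the $\Pi_\mu$-constraint on the other point, and observe that the maximizing coordinate automatically has positive value so that part~(1) of the minimality lemma applies in the $T_\mu$ case. The only cosmetic difference is that you phrase part~(2) directly rather than by contradiction, and you spell out the $T_\mu$-versus-$Q_\mu$ subtlety that the paper leaves implicit.
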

\begin{proof}
(1). 
Suppose $D_{\infty}^+(p^c,q^c) = q(s^c) - p(s^c) > 0$
for $s^c \in S^c$.
By Lemma~\ref{lem:minimality},
there is $t^r \in S^r$ such that $q(s^c) + q(t^r) = \mu(s,t)$.
Hence, 
$q(s^c) - p(s^c)
= \mu(s,t) - q(t^r) - p(s^c)
\leq p(t^r) - q(t^r) \leq D_{\infty}^+(q^r,p^r)$.
Thus $D_{\infty}^+(p^c,q^c) \leq D_{\infty}^+(q^r,p^r)$.
Similarly $D_{\infty}^+(p^c,q^c) \geq D_{\infty}^+(q^r,p^r)$.

(2). Suppose $p^c < q^c$. 
If $p(s^r) \leq q(s^r)$ for some $s^r \in S^r$, 
then there is $t^c \in S^c$ with $q(t^c) + q(s^r) = \mu(t,s)$, 
and $p(t^c) + p(s^r) < q(t^c) + q(s^r) = \mu(t,s)$; 
a contradiction to $p \in Q_{\mu}$.
\end{proof}
Next we study the local structure of $T_{\mu}$.
For a point $p \in T_{\mu}$, 
let $F(p)$ denote the face of $T_{\mu}$ that contains $p$ 
in its relative interior.
Suppose that $K(p)$ has 
$k$ (bipartite) components having 
no vertices $u \in S^{cr}$ with $p(u) = 0$.
Let $(A^c_1,B^r_1),\dots,(A^c_k,B^r_k)$ 
be bipartitions of these components.
Then the affine span of $F(p)$ is given by 
\begin{equation}\label{eqn:coordinate}
p + \sum_{i=1}^{k} x_i (\11_{A^c_i}, -\11_{B^r_i}) 
\quad (x_1,\ldots,x_k \in \RR).
\end{equation}
Therefore the dimension of $F(p)$ is given as follows; 
compare \cite[Proposition 17]{DS04}.
\begin{Lem}\label{lem:dim}
For a point $p \in T_{\mu}$, 
the dimension of $F(p)$ 
is equal to the number of connected components of $K(p)$
having no vertex $s$ with $p(s) = 0$.
\end{Lem}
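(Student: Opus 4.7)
The plan is to verify that the affine description (2.1) really does parametrize a $k$-dimensional face. The key point is to identify $F(p)$ with the solution set of the ``active'' constraints at $p$ inside $T_\mu$, and then compute the dimension of the linear space of allowed perturbations.

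First I would characterize $F(p)$ as the set of $p' \in T_\mu$ with $EK(p') \supseteq EK(p)$ and with $p'(u) = 0$ whenever $p(u) = 0$. For the reverse inclusion (that such $p'$ lie in $T_\mu$ near $p$, not merely in $P_\mu$), I would use that $K(p') \supseteq K(p)$, so by Lemma~\ref{lem:minimality}(1) there are no isolated vertices of $K(p')$ at which $p'$ is positive; the remaining non-active inequalities in $P_\mu$ are strict at $p$ and so stay strict after small perturbations. This justifies the claim that the affine span of $F(p)$ is exactly the affine subspace cut out by $p'(s^c) + p'(t^r) = \mu(s,t)$ for $s^c t^r \in EK(p)$ together with $p'(u) = 0$ for $p(u) = 0$.

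Next I would analyze the direction space of this affine subspace, i.e., vectors $v \in \RR^{S^{cr}}$ satisfying $v(s^c) + v(t^r) = 0$ on every edge of $K(p)$ and $v(u) = 0$ on every zero vertex of $p$. The system decouples across the connected components of $K(p)$. On a component with bipartition $(A^c, B^r)$, connectivity plus the edge equations force $v$ to equal a single scalar $x$ on $A^c$ and $-x$ on $B^r$, so the contribution is at most one-dimensional. If that component contains a vertex $u$ with $p(u)=0$, then $x = 0$ is forced and the contribution is zero. Components consisting of a single isolated vertex with $p(u) > 0$ cannot occur in $T_\mu$ by Lemma~\ref{lem:minimality}(1), so only the $k$ components with no zero vertex contribute, each giving the direction $(\mathbf{1}_{A_i^c}, -\mathbf{1}_{B_i^r})$.

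Finally I would observe that these $k$ direction vectors are linearly independent, because the sets $A_1^c, \ldots, A_k^c$ (and likewise $B_1^r, \ldots, B_k^r$) are pairwise disjoint, so the vectors have disjoint supports. Combined with the previous step this shows that the affine span in (2.1) has dimension exactly $k$, proving the lemma. The main place where care is needed is the identification of $F(p)$ with the active-constraint set, specifically verifying that small feasible perturbations remain inside $T_\mu$ and not merely inside $P_\mu$; the rest is linear algebra.
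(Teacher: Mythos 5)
Your proof is correct and follows the same route the paper sketches: it identifies $F(p)$ with the face of $P_\mu$ cut out by the active constraints, decomposes the direction space over the connected components of $K(p)$, and observes that the $k$ components with no zero vertex each contribute one linearly independent direction $(\11_{A_i^c},-\11_{B_i^r})$, which is exactly the content of the paper's display (2.1). You merely supply the verifications (the local analysis showing small perturbations stay in $T_\mu$, the propagation of $v(s^c)+v(t^r)=0$ across a connected bipartite component, and the disjoint-support argument for independence) that the paper leaves implicit.
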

We end this subsection with a seemingly obvious lemma. 
\begin{Lem}\label{lem:section}
For any balanced set $B \subseteq Q_\mu$, 
there exists a balanced section $R \subseteq Q_\mu$
containing $B$.
If $B \subseteq Q^+_\mu$, 
then there exists a balanced section $R \subseteq Q^+_{\mu}$ containing $B$.
\end{Lem}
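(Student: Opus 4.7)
The plan is to apply Zorn's lemma to the family of balanced subsets of $Q_\mu$ containing $B$, ordered by inclusion. Chains have upper bounds (balancedness is a pairwise condition, preserved by unions), yielding a maximal element $R \supseteq B$; I will show such an $R$ is automatically a section. First note that ${\mit\Pi}_\mu$ has linearity space $(\11,-\11)\RR$ and that $K(p)$ is invariant under the shift $p \mapsto p + t(\11,-\11)$, so by Lemma~\ref{lem:minimality}(2) the fibers of $Q_\mu \to \bar Q_\mu$ are entire affine lines $\{p + t(\11,-\11) : t \in \RR\}$. Injectivity of $R \to \bar Q_\mu$ is then immediate, because two distinct points on a common fiber differ by a nonzero multiple of $(\11,-\11)$, forcing $p^c < q^c$ or $q^c < p^c$ and contradicting balancedness.

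For surjectivity, suppose some fiber $F$ is missed, fix any $p_F \in F$, and look for $t \in \RR$ making $p_F + t(\11,-\11)$ balanced with every $r \in R$; such a $t$ would contradict maximality of $R$. For each $r \in R$, the valid shifts form the closed bounded interval
\[
 I_r = \bigl[\,\min_s (r(s^c) - p_F(s^c)),\ \max_s (r(s^c) - p_F(s^c))\,\bigr],
\]
which encodes the negation of $p_F^c + t\11 < r^c$ and of $r^c < p_F^c + t\11$. The main obstacle is verifying pairwise intersection $I_r \cap I_{r'} \neq \emptyset$ for all $r, r' \in R$: if they were disjoint with $I_r$ strictly below $I_{r'}$, any $t$ in the gap would satisfy $r^c < p_F^c + t\11 < r'^c$ pointwise, so $r^c < r'^c$, violating balancedness of $R$. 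Since any pairwise-intersecting family of closed intervals in $\RR$ shares the common interval $[\sup_r \alpha_r,\ \inf_r \beta_r]$, a valid $t$ exists.

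For the second assertion, the same Zorn argument is carried out inside $Q^+_\mu$ by restricting $t$ additionally to the interval $J_F = [\max_s(-p_F(s^c)),\ \min_s p_F(s^r)]$, which is the set of shifts keeping $p_F + t(\11,-\11)$ in $\RR^{S^{cr}}_+$. This $J_F$ is nonempty because $p_F \in Q_\mu$ and $\mu \geq 0$ yield $\min_s p_F(s^c) + \min_s p_F(s^r) \geq 0$. The extra pairwise compatibility $I_r \cap J_F \neq \emptyset$ for $r \in R \subseteq Q^+_\mu$ follows from $r \geq 0$ together with Lemma~\ref{lem:minimality}(2): applied to the vertex $s^{*,r}$ realizing $\min_s p_F(s^r)$, it supplies some $s^\dagger$ with $r(s^{\dagger,c}) + r(s^{*,r}) = \mu(s^\dagger,s^*)$, and the two required endpoint inequalities then follow from $r(s^{*,r}) \geq 0$, $r(s^{*,c}) \geq 0$, and $p_F \in {\mit\Pi}_\mu$. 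Helly for intervals in $\RR$ then delivers a valid $t$, completing the proof.
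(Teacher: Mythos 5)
Your proof is correct and follows essentially the same route as the paper: take a maximal balanced set $R \supseteq B$ via Zorn's lemma, observe that along a missed fiber the shifts compatible with a given $r \in R$ form a closed interval, verify pairwise intersection from balancedness of $R$, and conclude by Helly for intervals on a line (with an extra interval $J_F$ for the $Q_\mu^+$ case). One small point worth making explicit: your interval $I_r$ records only the column-side balancedness constraints (negations of $p_F^c + t\11 < r^c$ and $r^c < p_F^c + t\11$), and it is Lemma~\ref{lem:mutual}~(2) that guarantees the row-side constraints then come for free (since both $p_F + t(\11,-\11)$ and $r$ lie in $Q_\mu$); the paper invokes this lemma at exactly this step.
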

\begin{proof}
Let $B$ be a balanced set in $Q_{\mu}$.
By Zorn's lemma, 
we can take an inclusion-maximal balanced set $R \subseteq Q_\mu$
containing $B$.
We show that $R$ is a section.
Suppose not.
Then there is $p_0 \in Q_\mu$ with 
$\{p_0 + (\11,-\11) \RR\} \cap B = \emptyset$.
Let $L =  p_0 + (\11,-\11) \RR$.
For each point $q \in \RR^{S^{cr}}$, 
let $L_q$ be the set of points $p$ in $L$ such that
$\{p,q\}$ is balanced.
Then $L_q$ is a closed segment in $L$; see below.
We claim:
\begin{itemize}
\item[($*$)] For every balanced pair $q,q' \in \RR^{S^{cr}}$, 
the intersection $L_q \cap L_{q'}$ is nonempty.
\end{itemize}
Suppose true. 
By Helly property of segments on a line,
the intersection 
$\bigcap \{ L_q \mid q \in B\}$ is nonempty.
Take any point $q \in \bigcap \{ L_q  \mid q \in B \}$.
Then $\{ q \} \cup B$ is balanced, 
which contradicts the maximality of $B$.

So we show ($*$). 
In the following, 
for $p \in \RR^X$ we simply denote $\min_{x \in X} p(x)$ and 
$\max_{x \in X} p(x)$ by $\min_{} p$ and $\max_{} p$, respectively.
$L_q$ is indeed a segment given by 
$\{ p_0 + \alpha(\11,-\11) \mid  l_q \leq \alpha \leq u_q \}$, 
where $l_q := \min_{} (q -p_0)^c = \min_{} (p_0 - q)^r$ and 
$u_q :=  \max_{} (q - p_0)^c = \max_{} (p_0 - q)^r$.
This follows from: $\{q, p_0+ \alpha (\11,-\11)\}$ is balanced 
$\Leftrightarrow$ $\min_{} ({p_0}^c + \alpha \11 - q^c) \leq 0$ and 
$\min_{} (q^c - {p_0}^c - \alpha \11) \leq 0$ 
$\Leftrightarrow$ $\min_{} ({p_0}^r - \alpha \11 - q^r) \leq 0$ and 
$\min_{} (q^r - {p_0}^r + \alpha \11) \leq 0$ (Lemma~\ref{lem:mutual}~(2)).
Therefore, if $L_q \cap L_q' = \emptyset$, 
then either $\max_{} (q - p_0)^c < \min_{} (q' - p_0)^c$ or 
$\min_{} (q - p_0)^c > \max_{} (q' - p_0)^c$ holds, 
and this implies $q^c < (q')^c$ or $q^c > (q')^c$, 
i.e., $\{q,q'\}$ is not balanced.

For the latter part, by the same Helly idea, 
it suffices to show that 
$L \cap Q_{\mu}^+$ is nonempty, and
$L_q \cap Q_{\mu}^+$ 
is nonempty if $q \in Q_{\mu}^+$.
As $L \cap Q_{\mu}^+ (= L \cap \RR^{S^{cr}}_+)$ is given 
by $\{p_0 + \alpha(\11,-\11) \mid 
\max_{} -{p_0}^c \leq \alpha \leq \min_{} {p_0}^r \}$, 
it is nonempty by $p_0(s^c) + p_0(t^r) \geq \mu(s,t) \geq 0$.
Moreover, if $L_q \cap Q_{\mu}^+ = \emptyset$, then
either $\min_{} {p_0}^r < \min_{} (p_0 - q)^r$ 
or $\max_{} (q - p_0)^c < \max_{} -{p_0}^c$ holds;
both cases are impossible by $q \geq 0$.
\end{proof}

\subsection{Nonexpansive retractions}\label{subsec:nonexpansive}
The goal of this subsection is 
to show the existence of several {\em nonexpansive retractions} 
from $P_{\mu}$ to $T_{\mu}$, from $T_{\mu}$ to $Q^+_{\mu}$, 
and from $Q_{\mu}$ to any balanced section.
At the first glance, 
they are seemingly technical lemmas. 
However it will turn out that
many important properties can be easily derived from them.
The first retraction lemma is about $P_{\mu} \rightarrow T_{\mu}$, 
which is a natural extension of \cite[(1.9)]{Dress84}.
\begin{Lem}\label{lem:nonexpansive}
There exists a map $\phi:P_{\mu} \to T_{\mu}$ such that
\begin{itemize}
\item[{\rm (1)}] $\phi(p) \leq p$ for $p \in P_{\mu}$ (and thus
	     $\phi(p) = p$ for $p \in T_{\mu}$), and
\item[{\rm (2)}] $D_{\infty}(\phi(p), \phi(q)) \leq D_{\infty}(p,q)$ for $p,q \in P_{\mu}$. 
\end{itemize}
\end{Lem}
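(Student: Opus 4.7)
My plan is to construct $\phi$ explicitly via a two-pass ``lower each coordinate as far as possible'' procedure, processing the $c$-coordinates first and then the $r$-coordinates. For $p \in P_{\mu}$, set
\[
\phi(p)(s^c) := \max\Bigl(0,\ \max_{t \in S}\bigl(\mu(s,t) - p(t^r)\bigr)\Bigr), \qquad
\phi(p)(t^r) := \max\Bigl(0,\ \max_{s \in S}\bigl(\mu(s,t) - \phi(p)(s^c)\bigr)\Bigr).
\]
The asymmetry is essential: lowering $p^c$ and $p^r$ in parallel against the \emph{original} $p$ could exit $P_{\mu}$, but lowering $p^r$ against the already-lowered $\phi(p)^c$ preserves feasibility, because the first formula forces $\phi(p)(s^c) + p(t^r) \geq \mu(s,t)$ for all $s,t$.

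Verifying $\phi(p) \in P_{\mu}$, $\phi(p) \leq p$, and the retraction property $\phi|_{T_{\mu}} = \mathrm{id}$ is routine bookkeeping with the defining inequalities of $P_{\mu}$, together with the observation that every positive coordinate of a point in $T_{\mu}$ saturates some constraint. The key nonobvious claim $\phi(p) \in T_{\mu}$ is proved via Lemma~\ref{lem:minimality}(1). If $\phi(p)(s^c) > 0$ then the first max is attained at some $t^*$, giving $\phi(p)(s^c) = \mu(s,t^*) - p(t^{*r})$; the second formula then forces $\phi(p)(t^{*r}) = p(t^{*r})$, producing the edge $s^c t^{*r} \in EK(\phi(p))$. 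If $\phi(p)(t^r) > 0$, then the attaining $s^*$ in the second formula directly supplies an edge $s^{*c} t^r \in EK(\phi(p))$. Hence no positive coordinate of $\phi(p)$ is isolated in $K(\phi(p))$.

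For nonexpansivity, write $\alpha = D_{\infty}(p,q)$. When $\phi(q)(s^c) > 0$ is attained at some $t^*$,
\[
\phi(q)(s^c) - \phi(p)(s^c) \leq \bigl(\mu(s,t^*) - q(t^{*r})\bigr) - \bigl(\mu(s,t^*) - p(t^{*r})\bigr) = p(t^{*r}) - q(t^{*r}) \leq \alpha,
\]
while for $\phi(q)(s^c) = 0$ the difference is nonpositive; hence $D_{\infty}^{+}(\phi(p)^c, \phi(q)^c) \leq \alpha$. An entirely parallel argument for $\phi(p)(t^r) - \phi(q)(t^r)$, now feeding in the just-established $c$-coordinate bound in place of the $r$-coordinate hypothesis on $(p,q)$, yields $D_{\infty}^{+}(\phi(q)^r, \phi(p)^r) \leq \alpha$ and completes the proof. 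The main obstacle is finding the construction itself: the naive symmetric recipe does not land in $P_{\mu}$, and one must notice that the staggered two-pass version produces a single map that is simultaneously a retraction onto $T_{\mu}$ and nonexpansive.
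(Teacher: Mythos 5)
Your proof is correct, and it takes a genuinely different route to construct the retraction. The paper defines $\phi$ as a composition $\phi_{s_1}\circ\cdots\circ\phi_{s_n}$ of $2|S|$ elementary one-coordinate moves $\phi[-\11_{s^r}]$, $\phi[-\11_{s^c}]$, each lowering a single coordinate as far as possible within $P_\mu$, and then proves nonexpansivity for each elementary pair $\phi_s$. Your map instead lowers all of $p^c$ \emph{simultaneously} against the original $p^r$, and then all of $p^r$ simultaneously against the updated $\phi(p)^c$, giving a closed-form two-pass formula. The two maps are not the same (e.g., for $S=\{s,t\}$, $\mu(s,t)=\mu(t,s)=1$, and $p=(2,2,2,2)$, your map yields $(0,0,1,1)$ while the paper's yields $(1,0,1,0)$), so this is a genuinely different retraction — but the lemma only asks for existence, and yours satisfies all requirements. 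Your verification of (1) and of $\phi(p)\in T_\mu$ via Lemma~\ref{lem:minimality}(1) is sound, including the crucial observation that if $\phi(p)(s^c)>0$ is attained at $t^*$ then the second pass forces $\phi(p)(t^{*r})=p(t^{*r})$, producing the required edge. Your nonexpansivity argument mirrors the paper's in spirit (bound $c$-differences by $r$-differences and vice versa, exploiting the stagger), but is somewhat more streamlined since you feed the already-established $c$-bound into the $r$-pass rather than appealing to composability of nonexpansive maps. What the paper's construction buys in exchange is that the $\phi[v,\alpha]$ / $\phi[v]$ machinery is set up once and then reused verbatim in the proof of Lemma~\ref{lem:cyclic_nonexpansive}, where the relevant directions $(\11_{A^c},-\11)$ do not decompose coordinatewise and a closed-form two-pass formula would not be available.
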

The property (2) says that 
this map $\phi$ does not expand 
the distance of any pair of points in $P_{\mu}$.
A map between metric spaces with this property 
is said to be {\em nonexpansive}. 

In a directed metric space $(V,d)$, 
there is another notion of nonexpansiveness.
By a {\em cycle} in $V$
we mean a cyclic permutation $(p_1,p_2,\ldots,p_m)$
of (possibly repeated) points $p_1,p_2,\ldots,p_m \in V$. 
For a cycle $C = (p_1,p_2,\ldots,p_m)$, 
the {\em length} $d(C)$ is defined by
\[
d(C) = d(p_1,p_2) + d(p_2,p_3) + \ldots + d(p_{m-1},p_{m}) + d(p_{m},p_{1}). 
\]
For a map $\varphi: V \to V$, 
let $\varphi(C)$ denote the cycle $(\varphi(p_1), \varphi(p_2),\ldots, \varphi(p_m))$.
Then 
there is a retraction 
from $T_{\mu}$ to $Q^+_{\mu}$ not expanding the length of every cycle.
\begin{Lem}\label{lem:cyclic_nonexpansive}
There exists a map $\varphi : T_{\mu} \to Q^+_{\mu}$ such that 
\begin{enumerate}
 \item[{\rm (1)}] $\varphi(p) = p$ for $p \in Q^+_{\mu}$, and
 \item[{\rm (2)}] $D_{\infty}(\varphi(C)) \leq D_{\infty}(C)$ for every cycle $C$ in $T_{\mu}$.
\end{enumerate}
\end{Lem}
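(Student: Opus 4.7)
The plan is to construct $\varphi$ explicitly as a tropical fixed-point step followed by a shift in the linearity direction $(\11,-\11)$ of $\Pi_\mu$. For $p \in T_\mu$, set
\begin{equation*}
\tilde p(t^r) := \max_{s \in S}(\mu(s,t) - p(s^c)), \qquad \tilde p(s^c) := \max_{t \in S}(\mu(s,t) - \tilde p(t^r)).
\end{equation*}
Each vertex of $K(\tilde p)$ is incident to an edge (to any maximizer in the defining formula), so by Lemma~\ref{lem:minimality}(2), $\tilde p \in Q_\mu$. The vector $\tilde p$ may have negative entries, but the inequality $\tilde p(s^c) + \tilde p(t^r) \geq \mu(s,t) \geq 0$ gives $\min_s \tilde p(s^c) + \min_t \tilde p(t^r) \geq 0$, so setting $\alpha_p := \max\{0,\,-\min_s \tilde p(s^c)\}$ makes $\varphi(p) := \tilde p + \alpha_p(\11,-\11)$ an element of $Q_\mu \cap \RR^{S^{cr}}_+ = Q^+_\mu$.

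Property~(1) is then immediate: if $p \in Q^+_\mu$, then $K(p)$ has no isolated vertex, so the tropical fixed-point equations already hold at $p$, giving $\tilde p = p$; and $\alpha_p = 0$ since $p \geq 0$, so $\varphi(p) = p$. As an intermediate ingredient for~(2), I would prove that the tropical step $p \mapsto \tilde p$ is in fact \emph{pointwise} nonexpansive on $T_\mu$: applying the elementary inequality $\max_t a_t - \max_t b_t \leq \max_t(a_t - b_t)$ twice and taking positive parts yields
\begin{equation*}
D_\infty^+(\tilde p^c, \tilde q^c) \leq D_\infty^+(\tilde q^r, \tilde p^r) \leq D_\infty^+(p^c, q^c),
\end{equation*}
and invoking Lemma~\ref{lem:mutual}(1) upgrades this to $D_\infty(\tilde p, \tilde q) \leq D_\infty(p,q)$.

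The main obstacle is cyclic nonexpansiveness in~(2), because the shift step $\tilde p \mapsto \tilde p + \alpha_p(\11,-\11)$ is not itself pointwise nonexpansive. Writing $M_i := \max_s(\tilde p_{i+1} - \tilde p_i)(s^c)$ and $\Delta_i := \alpha_{p_{i+1}} - \alpha_{p_i}$, the constants factor out of the maximum and one computes $D_\infty^+(\varphi(p_i)^c, \varphi(p_{i+1})^c) = (M_i + \Delta_i)_+$, exceeding $D_\infty^+(\tilde p_i^c, \tilde p_{i+1}^c) = (M_i)_+$ by up to $(\Delta_i)_+$. The key bound I would establish is $(\alpha_q - \alpha_p)_+ \leq D_\infty^+(\tilde q^c, \tilde p^c)$, obtained by examining the coordinate $s^*$ attaining $\min_s \tilde q(s^c)$: it forces $\tilde p(s^{*c}) - \tilde q(s^{*c}) \geq \alpha_q - \alpha_p$, so shift excesses are dominated by the tropical step's slack in the reverse cycle direction. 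Using $\sum_i \Delta_i = 0$ around any cycle, together with a careful pairing of the forward and reverse slacks of the tropical step, should yield $\sum_i D_\infty(\varphi(p_i), \varphi(p_{i+1})) \leq \sum_i D_\infty(p_i, p_{i+1})$. I expect this cycle-level bookkeeping, tracking how the variation of $\alpha_{p_i}$ is absorbed by the tropical-step slack on both cycle orientations, to be the technical heart of the argument.
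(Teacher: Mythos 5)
Your construction takes a genuinely different route from the paper's. The paper descends from $T_\mu$ to $Q^+_\mu$ by composing infinitesimal ``flow'' retractions $\phi[(\11_{A^c},-\11)]$ and $\phi[(-\11,\11_{A^r})]$ over all nonempty proper $A \subseteq S$ in a carefully ordered sequence, proving each step is cyclically nonexpansive via a right-derivative computation on the cycle length. You instead give a closed-form retraction: a single two-sided tropical projection $p \mapsto \tilde p$ (which you correctly show lands in $Q_\mu$ and is pointwise $D_\infty$-nonexpansive) followed by a normalizing shift in the linearity direction $(\11,-\11)$. The key bound $(\alpha_q - \alpha_p)_+ \le D^+_\infty(\tilde q^c, \tilde p^c)$ is the right inequality for the shift, and your ``pairing'' instinct is sound; in fact the bookkeeping is cleaner than you anticipate. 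Applying the bound with $p,q$ swapped gives $\Delta_i := \alpha_{p_{i+1}} - \alpha_{p_i} \ge -(M_i)_+$ where $M_i = \max_s(\tilde p_{i+1}-\tilde p_i)(s^c)$; this alone forces $(M_i+\Delta_i)_+ - (M_i)_+ \le \Delta_i$ in every case, and summing against $\sum_i \Delta_i = 0$ closes the argument without needing the reverse slacks $M_i'$ at all.

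There is, however, a concrete gap: your shift constant $\alpha_p := \max\{0, -\min_s \tilde p(s^c)\}$ does not always put $\varphi(p)$ in $\RR^{S^{cr}}_+$. When $p \in T_\mu$ has all its isolated $K(p)$-vertices in $S^r$ and every $p(s^c) > 0$, one finds $\tilde p^c = p^c > 0$ (so $\alpha_p = 0$) but $\tilde p(t^r) < 0$ for each isolated $t^r$, so $\varphi(p) = \tilde p \notin Q^+_\mu$. A concrete instance on $S = \{1,2,3\}$: take $\mu(1,2)=\mu(2,1)=2$, $\mu(3,1)=\tfrac{3}{2}$, all other entries $0$, and $p = (1,1,\tfrac12,1,1,0)$ in coordinates $(1^c,2^c,3^c,1^r,2^r,3^r)$. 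Then $p \in T_\mu$ with $3^r$ isolated, $\tilde p = (1,1,\tfrac12,1,1,-\tfrac12)$, $\min_s \tilde p(s^c) = \tfrac12 > 0$ so $\alpha_p = 0$, and $\varphi(p)(3^r) = -\tfrac12 < 0$. Your observation $\min_s\tilde p(s^c) + \min_t\tilde p(t^r) \ge 0$ only shows the interval $\left[-\min_s\tilde p(s^c),\ \min_t\tilde p(t^r)\right]$ is nonempty; $\alpha_p$ must be chosen inside it (e.g.\ the projection of $0$ onto that interval, which may be negative), not merely $\max\{0,-\min_s\tilde p(s^c)\}$. With that fix, property (1), the containment in $Q^+_\mu$, and the key bound all hold, and the rest of the argument goes through.
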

We call a map with property (2) {\em cyclically nonexpansive}.
Any nonexpansive map is cyclically nonexpansive, 
and the converse is not true in general.
Two notions coincide in undirected metric spaces.

Let $R \subseteq Q_{\mu}$ be a balanced section. 
For any point $p \in Q_{\mu}$, 
there uniquely exists $q \in R$ with $\bar p = \bar q$.
From this correspondence, 
we obtain a retraction $\varphi_R: Q_{\mu} \to R$. 
Again $\varphi_R$ is cyclically nonexpansive.
\begin{Lem}\label{lem:balanced}
Let $R \subseteq Q_{\mu}$ be a balanced section.
\begin{itemize}
\item[{\rm (1)}] $D_{\infty}(\varphi_R(C)) \leq D_{\infty}(C)$ for every cycle $C$ in $Q_{\mu}$, and
\item[{\rm (2)}] subset $U \subseteq Q_{\mu}$ is balanced 
if and only if $D_{\infty}(\varphi_R(C)) = D_{\infty}(C)$ holds 
for every cycle $C$ in $U$. 
\end{itemize}
\end{Lem}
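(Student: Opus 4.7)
The plan is to coordinatize the retraction and exploit a simple telescoping. For each $p \in Q_\mu$, the line $p + (\11,-\11)\RR$ meets the section $R$ in exactly one point, so there is a unique $\alpha(p) \in \RR$ with
\[
\varphi_R(p) = p + \alpha(p)\,(\11,-\11).
\]
Setting $M(p,q) := \max_{s \in S}(q^c(s) - p^c(s))$ for $p,q \in Q_\mu$, Lemma~\ref{lem:mutual}(1) together with the definition of $D_\infty^+$ gives
\[
D_\infty(p,q) = (M(p,q))_+, \qquad D_\infty(\varphi_R(p),\varphi_R(q)) = (M(p,q) + \alpha(q) - \alpha(p))_+,
\]
the second formula because $\varphi_R(q)^c - \varphi_R(p)^c$ differs from $q^c - p^c$ by the constant $\alpha(q) - \alpha(p)$.

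The key observation is that since $\varphi_R(p), \varphi_R(q) \in R$ and $R$ is balanced, we cannot have $\varphi_R(p)^c < \varphi_R(q)^c$; equivalently, $M(p,q) + \alpha(q) - \alpha(p) \geq 0$, so the outer $(\cdot)_+$ on the retracted distance is redundant. Summing over a cycle $C = (p_1,\ldots,p_m)$, the telescoping sum $\sum_i (\alpha(p_{i+1}) - \alpha(p_i))$ vanishes, leaving
\[
D_\infty(\varphi_R(C)) \;=\; \sum_{i=1}^{m} M(p_i,p_{i+1}) \;\leq\; \sum_{i=1}^{m} (M(p_i,p_{i+1}))_+ \;=\; D_\infty(C).
\]
This proves (1), and shows moreover that equality holds exactly when every $M(p_i,p_{i+1})$ is already nonnegative.

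For (2), the forward direction is then immediate: if $U$ is balanced, then no pair $p,q \in U$ satisfies $q^c < p^c$, so $M(p,q) \geq 0$ for every such pair, in particular for each consecutive pair on any cycle in $U$, and equality holds. Conversely, if $U$ is not balanced, one obtains $p,q \in U$ with $p^c < q^c$ (swapping the pair via Lemma~\ref{lem:mutual}(2) if the witness comes from the $r$-side); for the length-two cycle $C = (p,q)$ we have $M(q,p) < 0$, hence
\[
D_\infty(\varphi_R(C)) = M(p,q) + M(q,p) < M(p,q) = D_\infty(C),
\]
violating the equality.

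The argument has no serious obstacle beyond careful bookkeeping; its genuine content is the observation that the balanced hypothesis on $R$ is exactly what is needed to remove the $(\cdot)_+$ from each retracted edge length. Once this is done, the cycle inequality reduces to the pointwise $(x)_+ \geq x$, and the characterization of the equality case is automatic.
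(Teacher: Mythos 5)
Your proof is correct and is a genuinely different — and considerably slicker — argument than the one in the paper. The paper proves Lemma~\ref{lem:balanced} the same way it proves Lemma~\ref{lem:cyclic_nonexpansive}: by introducing a one-parameter family $\varphi_{R,\alpha}$ of maps sliding points along the lineality direction, and showing that the derivative of the cycle length is nonpositive via a local analysis of which indices move and which do not. Your argument instead coordinatizes the retraction $\varphi_R(p) = p + \alpha(p)(\11,-\11)$ exactly (possible because $R$ is a section), rewrites each edge length via $D_\infty(p,q) = (M(p,q))_+$ using Lemma~\ref{lem:mutual}(1), observes that the balancedness of $R$ makes the outer $(\cdot)_+$ on each retracted edge redundant, and then lets the $\alpha$-contributions telescope around the cycle. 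This replaces the paper's continuity/flow machinery with a single algebraic identity, and it also makes the equality characterization in part~(2) entirely transparent: equality holds precisely when no $M(p_i,p_{i+1})$ is strictly negative, which by Lemma~\ref{lem:mutual}(2) is exactly balancedness of $U$. The paper's approach has the advantage of being structurally parallel to the proof of Lemma~\ref{lem:cyclic_nonexpansive}, where no such explicit coordinatization is available.

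One small slip: when you argue that the $(\cdot)_+$ is redundant, you say the balanced hypothesis rules out $\varphi_R(p)^c < \varphi_R(q)^c$, but what you actually need is that it rules out $\varphi_R(q)^c < \varphi_R(p)^c$, since $M(\varphi_R(p),\varphi_R(q)) < 0$ is equivalent to $\varphi_R(q)^c < \varphi_R(p)^c$. Because the definition of balanced excludes strict domination in \emph{both} orders, the conclusion is unaffected, but the implication as written points the wrong way.
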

We give a remark concerning the continuity of these maps.
Recall that any nonexpansive map between undirected metric spaces
is always continuous 
(with respect to the metric topology).
\begin{Rem}\label{rem:continuity}
One can see 
that map $p \mapsto \max \{(p^c)_+, (- p^r)_+\} + \max \{(- p^c)_+, (p^r)_+ \}$ 
defines a norm $\| \cdot \|$ in $\RR^{S^{cr}}$.
The corresponding distance $\| p - q \|$ equals 
to $D_{\infty}(p,q) + D_{\infty}(q,p)$.
Therefore any cyclically nonexpansive retraction 
is nonexpansive 
in this metric, 
and is continuous with 
respect to the topology induced by this norm.
Since 
any norm in $\RR^n$ defines the same topology, 
it is continuous 
with respect to the Euclidean topology.
As a consequence,  
all $T_{\mu}, Q_{\mu}^{+}, Q_{\mu}, \bar Q_{\mu}$ 
are contractible (the contractibility of tropical polytopes was shown
in \cite{DS04}).
In particular, any balanced section is a continuous section.
\end{Rem}
The rest of this subsection is devoted 
to proving three lemmas.
For a nonzero vector $v \in \RR^{S^{cr}}$ and 
a nonnegative real $\alpha \geq 0$, 
let $\phi[v, \alpha]: P_{\mu} \to P_{\mu}$ be the map defined by
\[
\phi[v,\alpha](p) = p + v 
\max \{ \epsilon \in [0,\alpha] \mid p + \epsilon v \in P_{\mu}\} \quad (p \in P_{\mu}).
\] 
This map moves each point in $P_{\mu}$ toward direction $v$ 
in same speed
as much as possible within time $\alpha$.
Let $\phi[v] := \lim_{\alpha \rightarrow \infty} \phi[v,\alpha]$, 
which is well-defined if $v$ has a negative component.
By definition, $\phi[v]$ is identity 
on the set of points $p$
such that the tangent cone of $P_{\mu}$ at $p$
does not contain $v$.
We will construct required retractions as 
the composition of $\phi[v]$ for several vectors $v$.
\paragraph{Proof of Lemma~\ref{lem:nonexpansive}.}
For $s \in S$, 
let $\phi_s := \phi[-\11_{s^c}] \circ \phi[-\11_{s^r}]$.  
Let $S = \{s_1,s_2,\ldots,s_n\}$ and 
let $\phi := \phi_{s_1} \circ \phi_{s_2} \circ \cdots \circ \phi_{s_n}$.
We remark that 
the map $\phi[-\11_u]$ 
decreases the $u$-th component of each point $p$ 
as much as possible, or equivalently, 
until ($*$) $p(u) = 0$ or
there appears an edge covering $u$ in $K(p)$ (Lemma~\ref{lem:minimality}~(1)).  
From this, we see that $\phi$ is a retraction from $P_\mu$ to $T_{\mu}$
with property (1).
So it suffices to show that $\phi_s$ is nonexpansive for $s \in S$.
Take $p,q \in P_\mu$. 
It suffices to compare their $s^c$- and $s^r$-components.
By ($*$) we have
$\phi_{s}(q)(s^c) = 
\max\{0, \mu(s,u) - q(u^r) \mid u \in S \} =
\max\{0, (\mu(s,u) - p(u^r)) + (p(u^r) - q(u^r)) \mid u \in S \} \leq 
\phi_{s}(p)(s^c) + \| (p^r - q^r)_{+}  \|_{\infty}$
and
$\phi_{s}(p)(s^r) = 
\max\{0, \mu(u,s) - p(u^c) \mid u \in S \} =
\max\{0, (\mu(u,s) - q(u^c)) + (q(u^c) - p(u^c)) \mid u \in S \} \leq 
\phi_{s}(q)(s^r) + \| (q^c - p^c)_{+}  \|_{\infty}$.
Thus $(\phi_{s}(q)(s^c) - \phi_s(p) (s^c))_+ \leq \| (p^r - q^r)_{+}  \|_{\infty}$
and $(\phi_{s}(p)(s^r) - \phi_s(q) (s^r))_+ \leq \| (q^c - p^c)_{+}  \|_{\infty}$.
Consequently we have 
$D_{\infty}(\phi_s(p),\phi_s(q)) \leq D_{\infty}(p,q)$.
This proof method 
is a direct adaptation of that of \cite[(1.9)]{Dress84}.

\paragraph{Proof of Lemma~\ref{lem:cyclic_nonexpansive}.}
By Lemma~\ref{lem:minimality}, 
any point $p \in T_{\mu} \setminus Q^+_{\mu}$ 
has isolated vertices in $K(p)$.
Since $\mu \geq 0$, 
the set $I$ of isolated vertices in $K(p)$ belongs to either $S^c$ or $S^r$.
If $I \subseteq S^c$, 
then $p + \epsilon (\11_{S^c \setminus I}, - \11) \in T_\mu$
for small $\epsilon > 0$, and if $I \subseteq S^r$, 
then $p + \epsilon (- \11, \11_{S^r \setminus I}) \in T_\mu$
for small $\epsilon > 0$.
On the other hand, if $p \in Q_{\mu}^+$, 
then $p + \epsilon (\11_{A^c}, - \11) \not \in T_{\mu}$ and
$p + \epsilon (- \11, \11_{A^r}) \not \in T_{\mu}$
for any nonempty proper subset $A \subseteq S$.
Define 
$\varphi^c_{A} = \phi[(\11_{A^c}, - \11)]$ 
and $\varphi^r_A = \phi[(- \11, \11_{A^r})]$ for a nonempty proper subset $A \subseteq S$.
Then both $\varphi^c_{A}$ and $\varphi^r_A$ are identity on
$Q^+_{\mu}$.
Order all nonempty proper subsets $A_1,A_2,\ldots, A_{m}$ $(m = 2^{|S|}-2)$ 
of $S$ such that $A_i \subseteq A_j$ implies $i \leq j$. 
Consider point $p \in T_{\mu}$ whose $K(p)$ 
has nonempty set $I$ of isolated vertices in $S^c$.
Apply all maps $\varphi_{A_i}^c$ ($i=1,2,\ldots, m$) to $p$ 
according to the ordering of subsets $A_i$.
In a step $j$, 
point $p$ moves if and only if ${A_j}^c = S^c \setminus I$.
If moves, then it moves until some $s^c \in I$ is covered by edges.
So the set $I$ of isolated vertices decreases.
By the definition of the ordering, 
there is a step $j'> j$ such that ${A_{j'}}^c = S^c \setminus I$.
Therefore, after the procedure, 
$K(p)$ has no isolated vertex, i.e., $p \in Q_{\mu}^+$.
Motivated by this, 
let us define
$\varphi^c = \varphi^c_{A_m} \circ \varphi^c_{A_{m-1}} \circ \cdots \circ
 \varphi^c_{A_2} \circ \varphi^c_{A_1}$ and 
$\varphi^r = \varphi^r_{A_m} \circ \varphi^r_{A_{m-1}} \circ \cdots \circ 
 \varphi^r_{A_2} \circ \varphi^r_{A_1}$.
By the argument above, $\varphi := \varphi^r \circ \varphi^c$ is a retraction 
from $T_\mu$ to $Q^+_\mu$.
So it suffices to verify that $\varphi^c_{A}$ 
is cyclically nonexpansive for each $A \subseteq S$.
Take any cycle $C = (p_1,p_2,\ldots, p_{n})$ 
in $T_{\mu}$.
For $\alpha \geq 0$, 
let $\varphi^c_{A, \alpha} = \phi[(\11_{A^c}, -\11), \alpha]$.
Define function $h: \RR_{+} \to \RR_{+}$ by
$h(\alpha) := D_{\infty}(\varphi^c_{A, \alpha}(C))$ 
for $\alpha \in \RR_+$, which is piecewise linear continuous.
We show that $h$ is nonincreasing.
It suffices to verify that the right derivative of $h$ 
at $\alpha = 0$ is nonpositive.
Let $i(C)$ be the set of indices $j$ 
such that $\varphi^c_{A, \epsilon}(p_j) \neq p_j$ 
for every $\epsilon > 0$.
For a small $\epsilon^* >0$, 
let $C^{*} = (p^{*}_1,p^{*}_2,\ldots, p^{*}_{n}) 
:= \varphi^c_{A, \epsilon^*}(C)$.
Then $p^*_j = p_j + \epsilon^* (\11_{A^c}, -\11)$ if $j \in i(C)$, 
and $p^*_j = p_j$ otherwise.
We show $D_{\infty}(C^{*}) \leq D_{\infty}(C)$.
We may assume that there are indices $k,l$ 
such that $k-1,l+1 \not \in i(C)$ and $k,k+1,\ldots, l-1,l \in i(C)$,
where the indices are taken modulo $n$.
For $j = k,k+1, \dots, l-1$,
we have $D_\infty(p_j, p_{j+1}) = D_\infty(p^{*}_j, p^{*}_{j+1})$.
Consider
$D_\infty(p^{*}_{l},p^{*}_{l+1}) 
= D_{\infty}(p_{l} + \epsilon^{*} (\11_{A^c},-\11), p_{l+1})$. 
Since $\varphi_{A}^c(p_{l+1}) = p_{l+1}$,
there is an edge $s^{c}t^{r}$
in $K(p_{l+1})$ 
connecting $S^c \setminus A^c$ and $S^r$.
Then
$\epsilon^* \leq p_{l}(s^c) + p_{l}(t^r) - \mu(s,t) 
 = p_{l}(s^c) + p_{l}(t^r) - p_{l+1}(s^c) - p_{l+1}(t^r)
 = p_{l}(t^r) - p_{l+1}(t^r) - p_{l+1}(s^c)$,
where we use $p_{l+1}(s^c) + p_{l+1}(t^r) = \mu(s,t)$
by $s^ct^r \in EK(p_{l+1})$
and $p_{l}(s^c) = 0$ (since $s^c$ is isolated in $K(p_l)$).
By Lemma~\ref{lem:mutual} and 
$p_{l}(t^r) - p_{l+1}(t^r) \geq \epsilon^*$, 
we have
\begin{equation*}
D_{\infty}(p^{*}_{l},p^{*}_{l+1})
= \max \{ (p_{l}(t^r) - \epsilon^{*}) - p_{l+1}(t^r) \mid t^r \in S^r \} 
= D_{\infty}(p_{l}, p_{l+1}) - \epsilon^{*}.
\end{equation*}
Next consider 
$D_{\infty}(p^{*}_{k-1},p^{*}_{k}) 
= D_{\infty}(p_{k-1},p_{k} + \epsilon^{*} (\11_{A_i^c},-\11))$.
By Lemma~\ref{lem:mutual} again,  we have 
\begin{equation*}
D_{\infty}(p^{*}_{k-1},p^{*}_{k})
= \max \{ 0, p_{k-1}(t^r) - (p_{k}(t^r) - \epsilon^{*}) \mid t^r \in S^r \} 
\leq D_\infty(p_{k-1}, p_{k}) + \epsilon^{*}.
\end{equation*}
Thus
$\sum_{i=k-1}^{l} D_{\infty}(p_{i},p_{i+1})
\geq \sum_{i=k-1}^{l} D_\infty(p^{*}_{i}, p^{*}_{i+1}).$
Consequently we obtain $D_\infty(C^{*}) \leq D_\infty(C)$.

\paragraph{Proof of Lemma~\ref{lem:balanced}.}
It suffices to show (1-2) 
for one particular balanced section $R$.
Let $C = (p_1,p_2,\ldots,p_n)$ be a cycle in $Q_\mu$. 
Since any translation of $R$ 
preserves $D_{\infty}$-distances 
among points in $R$, 
we may assume ($*1$) $(\varphi_{R}(p))^c > p^c$ 
for any point $p$ in $C$.
We use the same idea as above.
For $\alpha \geq 0$, let $\varphi_{R, \alpha}$ be a map defined by
\[
 \varphi_{R, \alpha}(p) 
= p + (\11,-\11) \max \{ \epsilon \in [0,\alpha] 
\mid p^c + \epsilon \11 \leq (\varphi_{R}(p))^c  \}.
\]
Then 
$\varphi_{R} 
= \lim_{\alpha \rightarrow \infty}\varphi_{R, \alpha}$.
Let $i(C)$ be the set of indices $j$ with $\varphi_{R, \alpha}(p) \neq p$ 
for every $\alpha > 0$.
For a small $\epsilon^* > 0$, 
let $C^{*} = (p^{*}_1,p^{*}_2,\ldots, p^{*}_{n}) := 
\varphi_{R, \epsilon^*}(C)$.
It suffices to show $D_{\infty}(C^{*}) \leq D_{\infty}(C)$.
We may assume that there are indices $k,l$ 
such that $k-1,l+1 \not \in i(C)$ and $k,k+1,\ldots, l-1,l \in i(C)$,
where the indices are taken modulo $n$.
For $j = k,k+1, \dots, l-1$,
we have $D_\infty(p_j, p_{j+1}) = D_\infty(p^{*}_j, p^{*}_{j+1})$.
Then
$D_\infty(p^{*}_{l},p^{*}_{l+1}) 
= D_{\infty}(p_{l} + \epsilon^{*} (\11,-\11),p_{l+1}) 
= \max_{s^c \in S^c} \{ p_{l+1}(s^c) -  p_l(s^c) - \epsilon^{*}, 0 \} 
=  \max_{s^c \in S^c} \{ p_{l+1}(s^c) -  p_l(s^c), 0\} - \epsilon^{*}$.
Here the last equality 
follows from $(p_{l+1})^c \not < (p^{*}_{l})^c$ by 
the assumption ($*1$) 
and the balancedness of $R$ (and Lemma~\ref{lem:mutual}~(2)).
Moreover, we have ($*2$)
$
D_{\infty}(p^{*}_{k-1},p^{*}_{k}) 
= D_{\infty}(p_{k-1}, p_{k} + \epsilon^{*} (\11,-\11)) 
 = \max \{p_{k}(s^c) + \epsilon^* - p_{k-1}(s^c), 0\}
\leq \max \{p_{k}(s^c) - p_{k-1}(s^c), 0\} + \epsilon^*$.
Hence we have $D_{\infty}(C^{*}) \leq D_{\infty}(C)$.
Here 
the last inequality in ($*2$) 
holds strictly 
if and only if $p_{k-1}^c > p_k^c$, i.e., $\{p_{k-1}, p_k\}$ is not balanced.
Consequently we have the claim (1-2).

\subsection{Geodesics and embedding}\label{subsec:embedding}
A {\em path} $P$ in $\RR^{S^{cr}}$ is the image of 
a continuous map $\varrho:[0,1] \to \RR^{S^{cr}}$ 
(with respect to the Euclidean topology).
Its {\em length from $\varrho(0)$ to $\varrho(1)$}
is defined by
\[
 \sup 
 \left\{ 
 \sum_{i=0}^{n-1} D_{\infty}(\varrho(t_{i}), \varrho(t_{i+1})) 
 \right\},
\]
where the supreme is taken over 
all $0 = t_0 \leq t_1 \leq t_2 \leq \cdots \leq t_{n} =1$ and $n \geq 1$.
Any segment $[p,q]$  has the length $D_{\infty}(p,q)$ from $p$ to $q$ 
since $D_{\infty}(p,q) = D_{\infty}(p,r) + D_{\infty}(r,p)$ for $r \in [p,q]$.
A subset $R$ in $\RR^{S^{cr}}$ is said 
to be {\em geodesic} if for each pair $p,q \in R$ 
there exists a path in $R$ of length 
$D_{\infty}(p,q)$ from $p$ to $q$.
\begin{Prop}\label{prop:geodesic}
$T_{\mu}$, $Q_{\mu}$, $Q^{+}_{\mu}$, 
and any balanced section $R \subseteq Q_{\mu}$ are all geodesic.
\end{Prop}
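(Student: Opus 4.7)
The strategy is to push straight-line segments into each of the four target sets using the (cyclically) nonexpansive retractions of Section~\ref{subsec:nonexpansive}. Recall that the segment $[p,q]$ in any convex set has length $D_{\infty}(p,q)$, and that any path from $p$ to $q$ has length at least $D_\infty(p,q)$ (from the trivial partition). Hence for each target set $X$ and each pair $p,q\in X$ it suffices to exhibit a continuous path in $X$ from $p$ to $q$ of length at most $D_\infty(p,q)$.

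\textbf{Tight span and its nonnegative part.} For $p,q\in T_\mu\subseteq P_\mu$, apply the nonexpansive retraction $\phi\colon P_\mu\to T_\mu$ of Lemma~\ref{lem:nonexpansive} (continuous by Remark~\ref{rem:continuity}) to $[p,q]\subseteq P_\mu$. The image is a continuous path in $T_\mu$ from $p$ to $q$, and termwise nonexpansiveness bounds its length by $D_\infty(p,q)$. For $p,q\in Q^+_\mu\subseteq P_\mu$, apply instead $\varphi\circ\phi\colon P_\mu\to Q^+_\mu$. For any partition $p=p_0,\ldots,p_n=q$ of $[p,q]$, set $r_i=\phi(p_i)\in T_\mu$ and $s_i=\varphi(r_i)\in Q^+_\mu$; then $r_0=s_0=p$ and $r_n=s_n=q$. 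Since $\varphi$ is only cyclically nonexpansive, apply Lemma~\ref{lem:cyclic_nonexpansive}(2) to the closed cycle $(r_0,r_1,\ldots,r_n)$ in $T_\mu$:
\[
\sum_{i=0}^{n-1}D_\infty(s_i,s_{i+1})+D_\infty(q,p)\;\leq\;\sum_{i=0}^{n-1}D_\infty(r_i,r_{i+1})+D_\infty(q,p).
\]
The $D_\infty(q,p)$ return terms cancel, and the nonexpansiveness of $\phi$ then gives $\sum D_\infty(s_i,s_{i+1})\leq\sum D_\infty(r_i,r_{i+1})\leq D_\infty(p,q)$.

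\textbf{$Q_\mu$ and balanced sections.} For $p,q\in Q_\mu$, the segment $[p,q]$ lies only in $\Pi_\mu$, not in $P_\mu$, so I extend the construction of Lemma~\ref{lem:nonexpansive} to a nonexpansive retraction $\phi^\sharp\colon\Pi_\mu\to Q_\mu$. The maps $\phi[-\11_u]$ are still well-defined on $\Pi_\mu$ (the defining inequalities bound each coordinate from below), and the composition $\phi^\sharp=\phi^\sharp_{s_1}\circ\cdots\circ\phi^\sharp_{s_n}$ with $\phi^\sharp_s=\phi[-\11_{s^c}]\circ\phi[-\11_{s^r}]$ is nonexpansive by the same calculation as in Lemma~\ref{lem:nonexpansive}, once the $\max\{0,\cdot\}$ cap is dropped. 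Its image lies in $Q_\mu$: after each $\phi^\sharp_s$ the vertices $s^c,s^r$ are covered by edges of $K$, and the $\Pi_\mu$-constraint $p(s^c)+p(s'^r)\geq\mu(s,s')$ prevents any later $\phi[-\11_{s'^r}]$ from decreasing $p(s'^r)$ past a value at which some tight edge through $s'^r$ exists, so the covering edges persist. Apply $\phi^\sharp$ to $[p,q]\subseteq\Pi_\mu$ as in the first case. For a balanced section $R\subseteq Q_\mu$ and $p,q\in R$, take the resulting geodesic path in $Q_\mu$ and apply $\varphi_R\colon Q_\mu\to R$ of Lemma~\ref{lem:balanced} (continuous by Remark~\ref{rem:continuity}, identity on $R$, cyclically nonexpansive); the same cycle-closing trick bounds the image path by $D_\infty(p,q)$.

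\textbf{Main obstacle.} The substantive technical step is the extension of Lemma~\ref{lem:nonexpansive} from $P_\mu\to T_\mu$ to $\Pi_\mu\to Q_\mu$: one must verify that the iterated coordinate-shrinking preserves previously created tight edges, so that after all the $\phi^\sharp_{s_i}$ no vertex of $K(\phi^\sharp(p))$ is isolated. Equally essential throughout is the cycle-closing trick, which converts the weaker cyclic nonexpansiveness of $\varphi$ and $\varphi_R$ into an honest path-length estimate by cancelling a single $D_\infty(q,p)$ return term on both sides; without it, the cyclically-but-not-nonexpansive retractions would not control path length at all.
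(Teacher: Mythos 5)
Your overall strategy --- push the straight segment $[p,q]$ through a (cyclically) nonexpansive retraction and use the cycle-closing trick to convert cyclic nonexpansiveness into an honest length bound --- is exactly the paper's. The paper packages this as a general reduction: if $R'$ is geodesic, $R\subseteq R'$, and there is a cyclically nonexpansive retraction $R'\to R$, then $R$ is geodesic. It then chains $P_\mu\to T_\mu\to Q^+_\mu\to (R\subseteq Q^+_\mu)$, which matches your first two cases.

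Where you diverge is $Q_\mu$ and arbitrary balanced sections in $Q_\mu$. You build a new nonexpansive retraction $\phi^\sharp\colon\Pi_\mu\to Q_\mu$ and run the same machine; the paper avoids constructing such a map. It first shows balanced sections \emph{in $Q^+_\mu$} are geodesic via the chain, then transports a geodesic from a section $R'\subseteq Q^+_\mu$ to any balanced section $R\subseteq Q_\mu$ through the mutually inverse bijections $\varphi_{R'}|_R$ and $\varphi_R|_{R'}$, using Lemma~\ref{lem:balanced}(2) (cycle lengths in a balanced set are preserved exactly) to get equality of lengths. For $Q_\mu$ itself the paper case-splits: if $\{p,q\}$ is balanced, take a balanced section through them; if not, route through an intermediate $r$ with $\bar r = \bar q$, $\{p,r\}$ balanced, $D_\infty(p,r)+D_\infty(r,q)=D_\infty(p,q)$, and append the segment $[r,q]$. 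Your $\phi^\sharp$ approach is more uniform and arguably cleaner conceptually (once the lemma is in hand), at the cost of having to prove a new retraction lemma; the paper's approach is longer but stays entirely within its already-proved Lemmas~\ref{lem:nonexpansive}--\ref{lem:balanced}.

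One caveat about $\phi^\sharp$: the claim that nonexpansiveness follows ``by the same calculation once the $\max\{0,\cdot\}$ cap is dropped'' is not literally correct. The paper's calculation for $\phi_s$ rests on the closed form $\phi_s(q)(s^c)=\max\{0,\mu(s,u)-q(u^r)\mid u\in S\}$, and that form is valid in $P_\mu$ precisely \emph{because} of the cap: it kills the $u=s$ contribution $-q'(s^r)\le 0$. In $\Pi_\mu$ that term can be positive and can even dominate --- for instance with $|S|=1$ and $\mu(s,s)=0$, $\phi^\sharp_s$ sends $(a,b)$ with $a+b\ge 0$ to $(a,-a)$, so $\phi^\sharp_s(q)(s^c)=q(s^c)$, whereas the capless formula would give $-q(s^r)\neq q(s^c)$ in general. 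So the composite $\phi^\sharp_s$ no longer depends on $q^r$ alone, and the paper's one-line estimate breaks. The fix is easy and keeps your plan intact: show directly that each single $\phi[-\11_u]$ is nonexpansive on $\Pi_\mu$ (for $u=s^r$, the updated coordinate is $\max_v\bigl(\mu(v,s)-q(v^c)\bigr)$, a $1$-Lipschitz function of $q^c$ for $D_\infty^+$, and no other coordinate moves), then compose. Your persistence-of-tight-edges argument for why $\phi^\sharp(\Pi_\mu)\subseteq Q_\mu$ is correct as you sketched it.
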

\begin{proof}
We show a general statement:
\begin{itemize}
\item[($*$)] For $R \subseteq R' \subseteq \RR^{S^{cr}}$,
suppose that $R'$ is geodesic and there is a cyclically nonexpansive retraction 
$\varphi$ from $R'$ to $R$. Then $R$ is also geodesic.
\end{itemize}
For $p,q \in R$, 
take a path $P$ 
connecting $p$ and $q$ in $R'$
of length $D_{\infty}(p,q)$ from $p$ to $q$.
Suppose that $P$ is the image of $\varrho:[0,1] \to R'$ with 
$(\varrho(0), \varrho(1)) = (p,q)$.
The image $\varphi(P)$ of the cyclically nonexpansive retraction $\varphi$
is also a path in $R$ connecting $p$ and $q$; see Remark~\ref{rem:continuity}.
We show that $\varphi(P)$ has length $D_{\infty}(p,q)$ from $p$ to $q$.
For $n > 0$ and $0 = t_0 \leq t_1 \leq t_2 \leq \cdots \leq t_{n} =1$, 
consider cycle 
$C = (\varrho(t_0), \varrho(t_1),\ldots, \varrho(t_{n}))$.
Then $D_{\infty}(\varphi(C)) \leq D_{\infty}(C)$. 
Since $(p,q) = (\varrho(t_0), \varrho(t_n)) 
= (\varphi \circ \varrho(t_0), \varphi \circ \varrho(t_n))$,
we have 
\begin{equation*}
D_{\infty}(p,q) \leq 
\sum_{i=0}^{n-1} D_{\infty}(\varphi \circ \varrho(t_i),\varphi \circ \varrho(t_{i+1}))
\leq \sum_{i=0}^{n-1} D_{\infty}(\varrho(t_i), \varrho(t_{i+1})) = D_{\infty}(p,q).
\end{equation*}
Thus $\varphi(P)$ has the length $D_{\infty}(p,q)$ from $p$ to $q$, 
and $R$ is geodesic.
Since $P_{\mu}$ is geodesic by convexity, 
Lemmas~\ref{lem:nonexpansive}, \ref{lem:cyclic_nonexpansive}, and 
\ref{lem:balanced} 
imply that $T_{\mu}$, $Q_{\mu}^+$, and any balanced section 
in $Q_{\mu}^+$ are geodesic.

Next we show that an arbitrary balanced section
$R$ in $Q_{\mu}$ is geodesic.
Take any balanced section $R'$ in $Q^{+}_\mu$.
Then the restriction of $\varphi_{R'}$ to $R$
is a bijection, and the inverse map is given by $\varphi_{R}$.
For $p,q \in R$, take a path $P \subseteq R'$
connecting $\varphi_{R'}(p)$
and $\varphi_{R'}(q)$ of length
$D_{\infty}(\varphi_{R'}(p), \varphi_{R'}(q))$
from $\varphi_{R'}(p)$ to $\varphi_{R'}(q)$.
Consider the image $\varphi_R(P)$,
which is a path connecting $p$ and $q$.
By Lemma~\ref{lem:balanced},  
$D_{\infty}(q,p)$ plus the length of $\varphi_R(P)$ from $p$ to $q$
is equal to $D_{\infty}(\varphi_{R'}(q), \varphi_{R'}(p))$ plus
the length $D_{\infty}(\varphi_{R'}(p), \varphi_{R'}(q))$ of $P$ 
from $\varphi_{R'}(p)$ to $\varphi_{R'}(q)$.
Since $D_{\infty}(q,p) + D_{\infty}(p,q) =
D_{\infty}(\varphi_{R'}(q), \varphi_{R'}(p))
+ D_{\infty}(\varphi_{R'}(p), \varphi_{R'}(q))$,
the length of $\varphi_R(P)$ from $p$ to $q$
coincides with $D_{\infty}(p,q)$.
Thus $R$ is geodesic.

Finally, we show that $Q_{\mu}$ is geodesic.
Take a pair $p,q$ in $Q_{\mu}$.
If $\{p,q\}$ is balanced,
then take a balanced section containing $p,q$
and apply the argument above.
So suppose that $\{p,q\}$ is not balanced.
Then we can take a point $r \in Q_{\mu}$
such that $\{p,r\}$ is balanced, $\bar r = \bar q$,
and $D_{\infty}(p,r) + D_{\infty}(r,q) = D_{\infty}(p,q)$.
Take a path $P \subseteq Q_{\mu}$ connecting $p,r$ of
length $D_{\infty}(p,r)$ from $p$ to $r$.
Consider path $P \cup [r,q] \subseteq Q_{\mu}$,
which connects $p$ and $q$ with length $D_{\infty}(p,q)$ from $p$ to $q$.
\end{proof}
Next we show that $\mu$ is realized by $D_{\infty}$-distances 
among a certain subsets in $T_{\mu}$, $Q_{\mu}^+$, 
and any balanced section $R$ in $Q_{\mu}^+$.
Further we denote by 
$T_{\mu,s}$, $Q_{\mu,s}^+$, and $R_{s}$,
the set of points $p$ with $p(s^c) = p(s^r) = 0$
in $T_{\mu}$, $Q^{+}_{\mu}$, and $R$, 
respectively.
For each $s \in S$, we define 
three vectors $\mu_s, \mu_s^{in}, \mu_s^{out} \in \RR^{S^{cr}}$ by
\begin{eqnarray*}
(\mu_s(t^c),\ \mu_s(t^r)) & = & (\mu(t,s), \mu(s,t)), \\
(\mu^{in}_s(t^c),\ \mu^{in}_s(t^r)) & = & 
(\mu(t,s),\ \max_{u \in S} \{ \mu(u,t)- \mu(u,s) \}), \\
(\mu^{out}_s(t^c),\ \mu^{out}_s(t^r)) & = & 
(\max_{u \in S} \{ \mu(t,u) - \mu(s,u) \},\ \mu(s,t)) \quad (t \in S).
\end{eqnarray*}
Both $\mu_s^{in}$ 
and $\mu_s^{out}$ belong to $T_{\mu,s}$. 
In particular 
$\mu_s^{in}$ and $\mu_s^{out}$ 
are unique maximal points in $T_{\mu,s}$
with respect the coordinates in $S^c$ and in $S^r$, respectively.
Now the embedding properties of $T_{\mu}$ and $Q_{\mu}^+$
are summarized as follows:
\begin{Prop}\label{prop:embedding}
The following properties hold:
\begin{itemize}
\item[{\rm (1)}] $T_{\mu,s} = Q_{\mu,s}^{+} = R_{s}$
for each $s \in S$ and any balanced section $R \subseteq Q_\mu^+$. 
\item[{\rm (2)}] 
$(p(s^c),p(s^r)) = (D_{\infty}(T_{\mu,s}, p), D_{\infty}(p, T_{\mu,s})) 
= (D_{\infty}(\mu_s^{out},p), D_{\infty}(p, \mu_s^{in}))$ for
$p \in T_{\mu}$ and $s \in S$. 
\item[{\rm (3)}] 
$\mu(s,t) = D_{\infty}(T_{\mu,s}, T_{\mu,t}) 
=  D_{\infty}(\mu_s^{out}, \mu_t^{in})$ 
for all $s,t \in S$.
\item[{\rm (4)}] If $\mu$ is a directed metric,
then $T_{\mu,s} = \{ \mu_s \}$ for each $s \in S$.
\end{itemize}
\end{Prop}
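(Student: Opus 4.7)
The plan is to establish the four claims in order, relying principally on Lemma~\ref{lem:minimality} (minimality characterized by absence of isolated vertices in $K(p)$), the two-sided formula of Lemma~\ref{lem:mutual}(1), the inclusion $Q_\mu^+ \subseteq T_\mu$ noted in the diagram, and the definitions of the three distinguished points $\mu_s$, $\mu_s^{in}$, $\mu_s^{out}$. Once (2) is in hand, claims (3) and (4) fall out by short arguments.

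For (1), I would first prove $T_{\mu,s} \subseteq Q_{\mu,s}^+$: if $p \in T_\mu$ with $p(s^c) = p(s^r) = 0$, the diagonal constraint $\mu(s,s) = 0$ is tight, so $s^c s^r \in EK(p)$, covering both $s^c$ and $s^r$. A hypothetical isolated $t^c$ would force $p(t^c) + p(s^r) > \mu(t,s)$, i.e., $0 > \mu(t,s)$, contradicting $\mu \geq 0$; symmetrically for $t^r$. Hence $K(p)$ has no isolated vertex and $p \in Q_\mu^+$. The reverse inclusion is free from $Q_\mu^+ \subseteq T_\mu$. For $R_s = Q_{\mu,s}^+$: given $p \in Q_{\mu,s}^+$, the unique $q = p + \alpha(\11,-\11) \in R$ with $\bar q = \bar p$ satisfies $q(s^c) = \alpha$ and $q(s^r) = -\alpha$, so nonnegativity of $q$ forces $\alpha = 0$, giving $p = q \in R_s$.

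For (2), the first step is to verify $\mu_s^{out}, \mu_s^{in} \in T_{\mu,s}$ directly: the defining inequalities and nonnegativity follow from the max-definitions together with $\mu \geq 0$, and minimality holds because every vertex with positive coordinate is covered in $K$ (on the $S^c$-side by the edge to the argmax in the definition, and on the $S^r$-side by the edge $s^c u^r$ for $\mu_s^{out}$). The distance identities then fall out of Lemma~\ref{lem:mutual}(1): for any $q \in T_{\mu,s}$ and $p \in T_\mu$,
\[
D_\infty(q, p) = \max_{t \in S} (p(t^c) - q(t^c))_+ \geq p(s^c) - q(s^c) = p(s^c),
\]
giving $D_\infty(T_{\mu,s}, p) \geq p(s^c)$; dually the two-sided formula gives $D_\infty(\mu_s^{out}, p) = \max_t(\mu(s,t) - p(t^r))_+ \leq p(s^c)$ by the defining constraint $p(s^c) + p(t^r) \geq \mu(s,t)$. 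Sandwiching yields the chain of equalities, and the $p(s^r)$ side is symmetric using $\mu_s^{in}$.

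Part (3) is a short consequence: applying (2) to $p \in T_{\mu,t}$ gives $p(s^c) = D_\infty(T_{\mu,s}, p)$, and the infimum of $p(s^c)$ over $p \in T_{\mu,t}$ equals $\mu(s,t)$ (attained at $\mu_t^{in}$, bounded below by the constraint $p(s^c) + p(t^r) \geq \mu(s,t)$ with $p(t^r) = 0$). For (4), the triangle inequality certifies $\mu_s \in T_{\mu,s}$: $\mu_s(u^c) + \mu_s(v^r) = \mu(u,s) + \mu(s,v) \geq \mu(u,v)$, nonnegativity is automatic, and every vertex is covered by an $s$-edge in $K(\mu_s)$. Uniqueness is then immediate: any $p \in T_{\mu,s}$ satisfies $p(t^c) \geq \mu(t,s) = \mu_s(t^c)$ and $p(t^r) \geq \mu(s,t) = \mu_s(t^r)$ from the defining inequalities at $s^r$ and $s^c$, so $\mu_s \leq p$, and minimality of $p$ in $P_\mu$ forces $p = \mu_s$. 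No step is conceptually difficult; the only real risk is keeping the $S^c$/$S^r$ bookkeeping straight, and the one substantive piece is the direct membership verification of $\mu_s^{out}$ and $\mu_s^{in}$ in part (2).
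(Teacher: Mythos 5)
Your proof is correct and follows essentially the same route as the paper: Lemma~\ref{lem:minimality} for the covering argument in (1), Lemma~\ref{lem:mutual}(1) for the distance computation in (2), and (2) specialized in (3). Two small remarks: in (1) the step ``isolated $t^c$ forces $0 > \mu(t,s)$'' silently uses Lemma~\ref{lem:minimality}(1) to conclude $p(t^c)=0$ (worth making explicit); and in (4) your organization is slightly cleaner than the paper's --- you verify $\mu_s \in T_{\mu,s}$ directly via the triangle inequality and then let minimality of $p$ do the work, whereas the paper instead derives the coordinate-wise upper bound $p(t^c) \le \mu(t,s)$ from an edge of $K(p)$ and the triangle inequality; both are valid, and yours also makes explicit that $T_{\mu,s}$ is nonempty, which the paper relies on implicitly via the unproved membership $\mu_s^{in}, \mu_s^{out} \in T_{\mu,s}$ stated just before the proposition (and which you correctly identify as needing verification).
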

The property (2) means that the coordinate $p(s^c)$ (resp. $p(s^r)$) is 
the $D_{\infty}$-distance from $T_{\mu,s}$ to 
$p$ (resp. $p$ to $T_{\mu,s}$).
The point $\mu_{s}^{in}$ plays a role of an {\em entrance} of $T_{\mu,s}$;
every point can enter $T_{\mu,s}$ through $\mu_{s}^{in}$.
The point $\mu_{s}^{out}$ is an {\em exit} of $T_{\mu,s}$.
The property (3) means that distance $\mu$ 
can be realized by $D_{\infty}$-distances among 
$T_{\mu,s}$.
By (4), if $\mu$ is a metric, then each $T_{\mu,s}$ is a single point, 
and $(S,\mu)$ is isometrically embedded into 
$(T_{\mu}, D_{\infty})$, $(Q_{\mu}^+, D_{\infty})$, and $(R, D_{\infty})$
by the map $s \mapsto \mu_s$.
\begin{proof}
(1). 
Let $p$ be a point in $T_{\mu,s}$.
By $p(s^c) = p(s^r) = 0$, 
if $p(u) = 0$, 
then $u$ is incident to $s^c$ or $s^r$ in $K(p)$.
Thus $K(p)$ has no isolated vertex, which implies $p \in Q_{\mu}^+$.
Moreover, for any $\epsilon > 0$,
the point $p \pm \epsilon (\11, -\11)$ has a negative coordinate
in $s^c$ or $s^r$. Thus $p \pm \epsilon (\11, -\11) \not \in Q_{\mu}^+$.
This means that every section in $Q_{\mu}^+$ contains $T_{\mu,s}$.

(2). 
Since every $q \in T_{\mu,s}$ satisfies $q(s^c) = 0$,
we have $p(s^c) = p(s^c) -  q(s^c) \leq D_{\infty}(q, p)$.
Thus, it follows that 
$p(s^c) \leq D_{\infty}(T_{\mu,s}, p) \leq D_{\infty}(\mu_{s}^{out}, p)$.
Conversely we have 
$\mu_{s}^{out}(t^r) - p(t^r) = \mu(s,t) - p(t^r) \leq p(s^c)$ 
for all $t^r \in S^r$, which implies
$D_{\infty}(T_{\mu,s}, p) \leq \| ((\mu_{s}^{out})^r - p^r)_{+} \|_{\infty} \leq p(s^c)$ 
(by Lemma~\ref{lem:mutual}~(1)).

(3). Since $p(s^c) = 0$ for all $p \in T_{\mu,s}$ 
and $q(t^c) \geq \mu(s,t)$ for all $q \in T_{\mu,s}$, 
we have $D_{\infty}(T_{\mu,s}, T_{\mu,t}) \geq \mu(s,t)$.
Conversely, by (2), 
$D_{\infty}(T_{\mu,s}, T_{\mu,t}) 
\leq D_{\infty}(\mu_{s}^{out}, \mu_t^{in}) = \mu_s^{in}(t^c) = \mu(s,t)$.

(4). Take $p \in T_{\mu,s}$.
For each $t \in S$, 
we have $p(t^c) \geq \mu(t,s)$ and $p(t^r) \geq \mu(s,t)$.
Since $p(t^c) + p(u^r) = \mu(t,u)$ holds for some $u$, 
we have $p(t^c) = \mu(t,u) - p(u^r) \leq \mu(t,u) - \mu(s,u) \leq \mu(t,s)$, 
where we use the triangle inequality for the last inequality.
Thus $p^c = (\mu_s)^c$, and similarly $p^r = (\mu_s)^r$.
\end{proof}
\begin{Rem}\label{rem:Tmu_s}
$T_{\mu,s}$ is also contractible 
since it is a retract of $\{p \in P_{\mu} \mid p(s^c) = p(s^r) = 0\}$ 
by $\phi$ in Lemma~\ref{lem:nonexpansive}.
\end{Rem}

\subsection{Tight extensions, cyclically tight extensions, and congruence}
\label{subsec:extension}
The goal of this section 
is to show that the tight span and (a fiber of) the tropical polytope
have universal embedding properties 
for metric extensions. 

Let $(S,\mu)$ be a directed metric space.
A directed metric space $(V,d)$ is called 
an {\em extension} of $(S,\mu)$ 
if $S \subseteq V$ and $d(s,t) = \mu(s,t)$ for $s,t \in S$.
An extension $(V,d)$ is said to be {\em tight} 
if there is no extension $(V,d')$ with $d' \leq d$ and $d' \neq d$.
An extension $(V,d)$ is said to be
{\em cyclically tight}
if there is no extension $(V,d')$ of $(S,\mu)$
such that $d'(C) \leq d(C)$ 
for all cycles $C$ in $V$ and $d'(C') < d(C')$ 
for some cycle $C'$.
For convention, 
we also use the notions of extension and tightness 
for metric functions, 
such as ``$d$ is a cyclically tight extension of $\mu$. ''

By Proposition~\ref{prop:embedding}~(3-4),  
$(T_{\mu}, D_{\infty})$, $(Q^+_{\mu}, D_{\infty})$, 
and $(R, D_{\infty})$ for any balanced section $R \subseteq Q^+_{\mu}$ 
are all extensions of $(S,\mu)$.
For an extension $(V,d)$ of $(S,\mu)$ and $x \in V$, 
we define vector $d_x \in \RR^{S^{cr}}$ by
\[
(d_x(s^c), d_x(s^r)) = (d(s,x), d(x,s)) \quad (s \in S).
\]
The main result of this section is the following:
\begin{Thm}\label{thm:extension}
Let $(S,\mu)$ be a directed metric space, and $(V,d)$ its extension.
\begin{itemize}
\item[{\rm (1)}] $d$ is tight if and only if 
there exists an isometric embedding $\rho: V \to T_{\mu}$
such that $\rho(s) = \mu_s$ for $s \in S$.
\item[{\rm (2)}] $d$ is cyclically tight 
if and only if there exists an isometric embedding $\rho: V \to Q_{\mu}^+$
such that $\rho(s) = \mu_s$ for $s \in S$ and $\rho(V)$ is balanced.
\end{itemize}
Moreover, if tight, then such an isometric embedding $\rho$ 
coincides with $x \mapsto d_x$ $(x \in V)$.
\end{Thm}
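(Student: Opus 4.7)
The plan is to prove both parts via the candidate map $\rho(x) := d_x$. The triangle inequality for $d$ ensures $d_x \in P_\mu$ and $D_\infty(d_x, d_y) \leq d(x,y)$, and $\rho(s) = \mu_s$ by construction. The ``moreover'' identification falls out of Proposition~\ref{prop:embedding}(2)(4): since $\mu$ is a metric, $T_{\mu,s} = \{\mu_s\}$, so any isometric $\rho: V \to T_\mu$ with $\rho(s) = \mu_s$ satisfies $\rho(x)(s^c) = D_\infty(\mu_s, \rho(x)) = d(s,x) = d_x(s^c)$, and symmetrically for $s^r$; the same computation handles the $Q_\mu^+ \subseteq T_\mu$ case in part (2).

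For the forward direction of (1), apply the nonexpansive retraction $\phi: P_\mu \to T_\mu$ of Lemma~\ref{lem:nonexpansive} and form $d'(x,y) := D_\infty(\phi(d_x), \phi(d_y))$. Since $\mu_s \in T_\mu$ gives $\phi(d_s) = \mu_s$, Proposition~\ref{prop:embedding}(3) gives $d'(s,t) = \mu(s,t)$; nonexpansivity of $\phi$ together with $D_\infty(d_x, d_y) \leq d(x,y)$ yields $d' \leq d$, and tightness forces $d' = d$. Proposition~\ref{prop:embedding}(2) applied coordinate-wise to $\phi(d_x) \in T_\mu$ then gives $\phi(d_x)(s^c) = D_\infty(\mu_s, \phi(d_x)) = d(s,x) = d_x(s^c)$ and symmetrically, forcing $\phi(d_x) = d_x$; hence $d_x \in T_\mu$ and $d(x,y) = D_\infty(d_x, d_y)$. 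For the backward direction, the identification yields $\rho(x) = d_x \in T_\mu$; any extension $d' \leq d$ gives $d'_x \in P_\mu$ with $d'_x \leq d_x \in T_\mu$, so minimality forces $d'_x = d_x$, which pins down $d'(s,x)$ and $d'(x,s)$ for $s \in S$. For $x, y \in V \setminus S$, triangle inequalities $d'(x,y) \geq d'(s,y) - d'(s,x) = d(s,y) - d(s,x)$ and $d'(x,y) \geq d(x,s) - d(y,s)$ combined with the isometric formula $d(x,y) = \max_s\{(d(s,y) - d(s,x))_+, (d(x,s) - d(y,s))_+\}$ squeeze $d'(x,y) = d(x,y)$.

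For part (2), cyclic tightness implies tightness (any pointwise-smaller extension $d' \leq d$ with $d' \neq d$ is in particular cyclically-smaller on a 2-cycle through the disagreement pair), so part (1) already yields $d_x \in T_\mu$ and $d(x,y) = D_\infty(d_x, d_y)$. To upgrade to $d_x \in Q_\mu^+$, use the cyclically nonexpansive retraction $\varphi: T_\mu \to Q_\mu^+$ of Lemma~\ref{lem:cyclic_nonexpansive} and set $d''(x,y) := D_\infty(\varphi(d_x), \varphi(d_y))$; cyclic tightness forces $d''(C) = d(C)$ for every cycle, and specializing to the 2-cycle $(s,x)$ with Proposition~\ref{prop:embedding}(2) yields $\varphi(d_x)(s^c) + \varphi(d_x)(s^r) = d_x(s^c) + d_x(s^r)$ for each $s \in S$. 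The crucial observation is that each elementary move used in constructing $\varphi$, namely $\phi[(\11_{A^c}, -\11)]$ or $\phi[(-\11, \11_{A^r})]$ for nonempty proper $A \subsetneq S$, strictly decreases $\sum_s(p(s^c) + p(s^r))$ whenever applied nontrivially; the coordinate-sum conservation therefore forces $\varphi(d_x) = d_x$, so $d_x \in Q_\mu^+$. For balancedness of $\{d_x\}$, pick a balanced section $R \supseteq \{\mu_s \mid s \in S\}$ (available by Lemma~\ref{lem:section}, as $\{\mu_s\}$ is manifestly balanced) and run the analogous argument with the retraction $\varphi_R: Q_\mu^+ \to R$ from Lemma~\ref{lem:balanced}; cyclic tightness gives $D_\infty(\varphi_R(C)) = D_\infty(C)$ for every cycle $C$ in $\{d_x\}$, and Lemma~\ref{lem:balanced}(2) identifies $\{d_x\}$ as balanced. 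The backward direction of (2) is dual: extending $\{d_x\}$ to a balanced section $R$, for any extension $d'$ with $d'(C) \leq d(C)$ the images $r'_x := \varphi_R(\varphi(\phi(d'_x))) \in R$ satisfy $r'_s = \mu_s$, and combining the resulting cycle-sum bound with Lemma~\ref{lem:balanced}(2) and the boundary agreement forces $r'_x = d_x$, hence $d'(C) = d(C)$.

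The main obstacle is the forward direction of (2): translating the cycle-sum equality $d''(C) = d(C)$ into the pointwise identity $\varphi(d_x) = d_x$ hinges on the monotonicity of $\sum_s(p(s^c) + p(s^r))$ under the elementary moves of $\varphi$, which is a structural feature of the explicit construction in Lemma~\ref{lem:cyclic_nonexpansive}; the balancedness step similarly relies on the characterization in Lemma~\ref{lem:balanced}(2) of balanced sets as those whose $D_\infty$-cycle lengths are preserved by $\varphi_R$.
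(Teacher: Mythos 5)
Your plan matches the paper's: build the candidate map $\rho = (x \mapsto d_x)$, pin it down via Proposition~\ref{prop:embedding}~(2)(4), use the nonexpansive retraction $\phi$ of Lemma~\ref{lem:nonexpansive} for part~(1), and use the cyclically nonexpansive retractions $\varphi$ and $\varphi_R$ together with cycle-length arguments for part~(2). The forward directions are correct; your coordinate-sum-conservation argument for $\varphi(d_x) = d_x$ is a clean equivalent of the paper's observation that an isolated $s^c$ in $K(d_x)$ would force $d_x(s^c)+d_x(s^r)$ to strictly drop under $\varphi$. Your backward direction of~(1) is slightly more elementary than the paper's (a direct triangle-inequality squeeze rather than invoking the forward direction on a tight $d'\leq d$); both are fine.

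There is, however, a genuine gap in the backward direction of~(2). You assert that ``combining the resulting cycle-sum bound with Lemma~\ref{lem:balanced}(2) and the boundary agreement forces $r'_x = d_x$.'' But at this point you only have the \emph{one-sided} inequality $D_{\infty}(r'\text{-cycle}) \leq d'(C) \leq d(C) = D_{\infty}(d\text{-cycle})$ together with $r'_s = \mu_s = d_s$, and the whole point is to prove the reverse inequality $d'(C) \geq d(C)$. Lemma~\ref{lem:balanced}(2) only characterizes balancedness by \emph{equality} of $\varphi_R$-cycle lengths; it gives no mechanism for converting a one-sided bound between two balanced families inside the same section $R$ that agree on $S$ into the pointwise identity $r'_x = d_x$. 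The missing ingredient is the direct computation the paper carries out: from $d'(s,x)+d'(x,t)+d'(t,s) \leq d(s,x)+d(x,t)+d(t,s)$ and $d'(t,s)=d(t,s)=\mu(t,s)$ one gets $d'_x(s^c)+d'_x(t^r) \leq d_x(s^c)+d_x(t^r)$ for all $s,t$, with equality precisely on edges of $K(d_x)$ (since $d'_x \in {\mit\Pi}_\mu$ makes the left side at least $\mu(s,t)$ and those edges are tight for $d_x$). Comparing two arbitrary edges of $K(d_x)$ via this inequality gives a chain of equalities showing the shift $d'_x(u)-d_x(u)$ (with opposite sign on $S^r$) is constant across all covered vertices; since $d_x \in Q_\mu^+$ means $K(d_x)$ has no isolated vertices, this yields $d_x - d'_x \in (\11,-\11)\RR$, i.e.\ $\bar d_x = \bar d'_x$. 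Only after that identification does the retraction machinery close the argument as you intend. So the gap is repairable, but not by Lemma~\ref{lem:balanced}(2); you need this $K(d_x)$-based computation.
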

In particular, (1) says that $(T_{\mu},D_{\infty})$ is 
a unique inclusion-maximal tight extension of $(S,\mu)$.
(2) says that 
$Q^+_{\mu}$ plays a role of the tight span 
with respect to the cyclic tightness.
In contrast to tight extensions,  
there are many maximal cyclically tight extensions, 
which correspond to balanced sections in $Q^+_\mu$.
\begin{proof}
(1). Suppose that $d$ is tight.
Define $\rho: V \to \RR^{S^{cr}}$ by $\rho(x) = d_x$ $(x \in V)$.
Then $\rho(s) = \mu_s \in T_{\mu}$ for $s \in S$.
By  triangle inequality, 
we have $D_{\infty}(\rho(x),\rho(y)) 
= \max \{ \| ({d_y}^c - {d_x}^c)_+\|_{\infty}, 
 \| ({d_x}^r - {d_y}^r)_+\|_{\infty} \} \leq d(x,y)$.
By tightness, we have $D_{\infty}(\rho(x),\rho(y)) = d(x,y)$.
Since $d_x(s) + d_x(t) = d(s,x) + d(x,t) \geq d(s,t) = \mu(s,t)$,
each $\rho(x) = d_x$ belongs to $P_{\mu}$.
Furthermore each $d_x$ belongs to $T_{\mu}$. 
Consider
a nonexpansive retraction $\phi$ in Lemma~\ref{lem:nonexpansive}.
By Proposition~\ref{prop:embedding}~(2),  
$(D_{\infty}(\mu_s, \phi(d_x)), D_{\infty}(\phi(d_x), \mu_s))
= (\phi(d_x)(s^c), \phi(d_x)(s^r)) 
\leq (d(s,x), d(x,s))$. 
By tightness, $\phi(d_x) = d_x$ and thus $d_x \in T_{\mu}$ 
for $x \in V$.

We next show the if part.
Let $\rho:V \to T_{\mu}$ be an isometric embedding.
By Proposition~\ref{prop:embedding}~(2), 
we have $(\rho(x)(s^c), \rho(x)(s^r)) 
= (D_{\infty}(\mu_s, \rho(x)), D_{\infty}(\rho(x), \mu_s)) 
= (d(s,x), d(x,s)) = (d_x(s^c), d_x(s^r))$ for $s \in S$. 
Thus $\rho(x) = d_x$ for $x \in V$.
Take a tight extension $d'$ on $V$ with $d' \leq d$.
Each $d'_x$ belongs to $P_\mu$ as above.
Since $d'_x \leq d_x = \rho(x)$ 
and $\rho(x)$ is minimal in $P_\mu$, 
we have $d'_x = \rho(x)$.
Consequently 
$d'(x,y) = D_{\infty}(d'_x, d'_y) = D_{\infty}(\rho(x), \rho(y)) = d(x,y)$, 
and therefore $d$ is tight.
(2). Suppose that $d$ is cyclically tight.
Since $d$ is tight, 
by (1) it suffices to show
that $\{d_x \mid x \in V\}$ is balanced and is a subset in $Q^+_{\mu}$.
By Lemma~\ref{lem:balanced},  
$\{d_x \mid x \in V\}$ is necessarily balanced.
We verify $d_x \in Q^+_{\mu}$ for each $x \in V$.
Suppose that $K(d_x)$ has an isolated vertex $s^c \in S^c$ (or $s^r \in S^r$).
By the construction of map $\varphi$ in Lemma~\ref{lem:cyclic_nonexpansive},
$D_{\infty}(d_s, d_x) + D_{\infty}(d_x, d_s) (= d_x(s^c) + d_x(s^r))$ 
strictly decreases.
A contradiction to the cyclic tightness of $d$.

We next show the if part.
We show that $d$ is cyclically tight.
Take a cyclically tight extension $d'$ on $V$ 
such that $d'(C) \leq d(C)$ for every cycle in $V$.
By Lemma~\ref{lem:balanced}, 
it suffices to show $\bar d_x = \bar d'_x$ 
for $x \in V$.
Since $d'(s,x) + d'(x,t) + d'(t,s) \leq d(s,x) + d(x,t) + d(t,s)$
for all $s,t \in S$ and $x \in V$,
we have 
$d'_x(s^c) + d'_x(t^r) \leq d_x(s^c) + d_x(t^r)$, 
that is
$d'_x(s^c) - d_x(s^c) \leq d_x(t^r) - d'_x(t^r)$.
In particular, for an edge $s^ct^r$ in $K(d_x)$,
we have 
$\mu(s,t) \leq d'_x(s^c) + d'_x(t^r) \leq d_x(s^c) + d_x(t^r) = \mu(s,t)$,
and hence
$d'_x(s^c) - d_x(s^c) = d_x(t^r) - d'_x(t^r)$.
Therefore, for two edges $s^ct^r$ and $\tilde s^c \tilde t^r$ in
$K(d_x)$, we have ($*$)
$d'_x(s^c) - d_x(s^c) \leq d_x(\tilde t^r) - d'_x(\tilde t^r)
= d'_x(\tilde s^c) - d_x(\tilde s^c) \leq d_x(t^r) - d'_x(t^r)
= d'_x(s^c) - d_x(s^c)$.
Since $K(d_x)$ has no isolated vertex,
it follows from ($*$) that $d_x - d'_x \in (\11,-\11)\RR$.
\end{proof}
Two directed metrics $d,d'$ on the same set $V$
are said to be {\em congruent} if $d(C) = d'(C)$ for each cycle $C$ in $V$.
Then $d$ and $d'$ are congruent 
if and only if there is $\alpha : V \to \RR$ such that
\[
 d(x,y) = d'(x,y) - \alpha(x) + \alpha(y) \quad (x,y \in V).
\]
This is a simple consequence of the well-known fact
in the network flow theory 
that any circulation in a directed network is 
the sum of the incidence vectors of cycles; see \cite{AMO}.

The tropical polytope $\bar Q_\mu$ describes all congruence classes of 
cyclically tight extensions.
This establishes a role of tropical polytopes 
in metric extensions.
For map $\rho: V \to Q_{\mu}$, 
let $\bar \rho: V \to \bar Q_{\mu}$ be the map obtained 
by projecting $\rho(\cdot)$ to $\bar Q_{\mu}$.
\begin{Prop}\label{prop:congruent}
Let $(S,\mu)$ be a directed metric space, and let $V \supseteq S$.
\begin{itemize}
\item[{\rm (1)}] Two cyclically tight extensions $d,d'$ of $\mu$ 
on $V$  are congruent 
if and only if $\bar d_x = \bar d'_x$ for each $x \in V$.
\item[{\rm (2)}] 
A directed metric $d$ on $V$ is congruent to a cyclically tight extension of $\mu$
if and only if 
there exists an isometric embedding $\rho: V \to Q_{\mu}$
such that $\bar \rho(s) = \bar \mu_s$ for $s \in S$ and $\rho(V)$ is balanced.
\end{itemize}
\end{Prop}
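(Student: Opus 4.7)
My plan is to prove both (1) and (2) by working in the lifted picture: each cyclically tight extension $d'$ corresponds, via Theorem~\ref{thm:extension}~(2), to the balanced subset $\{d'_x \mid x \in V\} \subseteq Q_\mu^+$, and I will track how distances transform when points are translated in the linearity direction $(\11,-\11)\RR$. The key computational principle, used throughout, is that if a pair $\{p,q\} \subseteq Q_\mu$ is balanced, then Lemma~\ref{lem:mutual} yields $D_\infty(p,q) = \max_{s \in S}(q(s^c)-p(s^c))$ without the truncation $(\cdot)_+$, since balancedness forces this maximum to be nonnegative.

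For (1), the forward direction is immediate: congruence $d(x,y) = d'(x,y) - \alpha(x) + \alpha(y)$ restricted to $s,t \in S$ forces $\alpha|_S$ constant (normalize to zero), and substituting into the definitions shows $d_x - d'_x = \alpha(x)(\11,-\11)$, hence $\bar d_x = \bar d'_x$. For the converse I extract $\alpha: V \to \RR$ with $d_x = d'_x + \alpha(x)(\11,-\11)$ and $\alpha|_S = 0$ (from $d_s = d'_s = \mu_s$); since both $\{d_x\}$ and $\{d'_x\}$ are balanced by Theorem~\ref{thm:extension}~(2), the principle above gives
\[
d(x,y) = \max_s\bigl(d_y(s^c)-d_x(s^c)\bigr) = \max_s\bigl(d'_y(s^c)-d'_x(s^c)\bigr) + \alpha(y) - \alpha(x) = d'(x,y) + \alpha(y) - \alpha(x),
\]
which is the desired congruence.

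For the ``only if'' direction of (2), given $d$ congruent to a cyclically tight $d'$, I define $\rho(x) := d_x = d'_x + \beta(x)(\11,-\11)$, where $\beta$ is the coboundary encoding the congruence as in (1). Then $\rho(x) \in Q_\mu$ since $(\11,-\11)\RR$ is the linearity space of ${\mit\Pi}_\mu$; $\bar\rho(s) = \bar\mu_s$ because $\beta(s) = 0$; and the isometric property $D_\infty(\rho(x),\rho(y)) = d(x,y)$ follows by the same computation as in (1). Balancedness of $\rho(V)$ is the delicate step: if $\rho(y)^c < \rho(x)^c$ strictly, taking max over $s$ gives $d'(x,y) < \beta(x) - \beta(y)$, hence $d(x,y) = d'(x,y) + \beta(y) - \beta(x) < 0$, contradicting $d \geq 0$. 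For the ``if'' direction, I use Lemma~\ref{lem:section} to pick a balanced section $R \subseteq Q_\mu^+$ containing $\{\mu_s \mid s \in S\}$, then set $\rho'(x) := \varphi_R(\rho(x)) \in R$; since $R$ is a section with $\mu_s \in R$ and $\bar\rho(s) = \bar\mu_s$, we have $\rho'(s) = \mu_s$. The pullback $d'(x,y) := D_\infty(\rho'(x),\rho'(y))$ is then a cyclically tight extension of $\mu$ by the ``if'' direction of Theorem~\ref{thm:extension}~(2), and congruence of $d,d'$ follows by writing $\rho(x) = \rho'(x) + \alpha(x)(\11,-\11)$ and invoking balancedness of both $\rho(V)$ and $\rho'(V)$ to strip the $(\cdot)_+$ in the distance calculation.

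The main technical obstacle lies in the balancedness verification in the ``only if'' direction of (2): a non-uniform translation $\beta(x)(\11,-\11)$ of a balanced set can a priori destroy balancedness, and it is the nonnegativity of $d$ itself that rescues the property via the short contradiction sketched above.
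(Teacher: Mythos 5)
Your proof is correct and runs parallel to the paper's, but you replace the paper's reliance on the cyclically nonexpansive retraction $\varphi_R$ (Lemma~\ref{lem:balanced}) with the direct ``strip the truncation'' principle for balanced pairs in $Q_\mu$: in part~(1) the paper derives the ``if'' direction by retracting $\{d'_x\}$ onto a balanced section through $\{d_x\}$ and invoking Lemma~\ref{lem:balanced}, whereas you compute $D_\infty(d_x,d_y)$ and $D_\infty(d'_x,d'_y)$ directly after dropping the $(\cdot)_+$; and in the balancedness step of part~(2) ``only if'', the paper argues via cycle lengths ($d'(C) < d''(C) \le d(C)$ contradicting $d'(C)=d(C)$, through Lemma~\ref{lem:balanced}~(2)), while you derive $d(x,y)<0$ directly from the strict coordinate inequality, which is arguably the cleaner argument. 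Your ``if'' direction of~(2) is essentially identical to the paper's.

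One small slip: in the ``only if'' direction of~(2) you write $\rho(x) := d_x = d'_x + \beta(x)(\11,-\11)$, but the identity $d_x = d'_x + \beta(x)(\11,-\11)$ requires $\beta|_S = 0$, which need not hold here since the hypothesis is only that $d$ is a directed metric congruent to an extension $d'$ of $\mu$ --- $d$ itself may fail to restrict to $\mu$ on $S$, so $\beta|_S$ is merely constant on each cycle and cannot in general be normalized to zero on all of $S$. (This is exactly why the conclusion only asserts $\bar\rho(s) = \bar\mu_s$ rather than $\rho(s)=\mu_s$.) The correct definition is $\rho(x) := d'_x + \beta(x)(\11,-\11)$, which is what your balancedness and isometry computations actually use, so the argument survives; only the cosmetic identification with $d_x$ should be dropped.
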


\begin{proof}
(1).
The if part 
immediately follows from the previous theorem and Lemma~\ref{lem:balanced}; 
take a balanced section $R$ containing $\{d_x \mid x \in V\}$, 
and apply $\varphi_R$ to $\{d'_x \mid x \in V\}$.
Observe
that $\bar d_x = \bar d'_x$ if and only 
if $d(s,x) - d'(s,x) = d'(x,t) - d(x,t)$ for all $s,t \in S$.
Therefore, $\bar d_x \neq \bar d'_x$ implies 
$d(s,x) + d(x,t) + d(t,s) \neq d'(s,x) + d'(x,t) + d'(t,s)$
for some $s,t \in S$. 
This proves the only-if part.

(2). 
We first show the if part.
Take any balanced section $R$ in $Q^+_{\mu}$ 
(with help of Lemma~\ref{lem:section}).
Consider $\varphi_R \circ \rho$.
The corresponding metric $d'$ on $V$
defined as $d'(x,y) = D_{\infty}( \varphi_R \circ \rho(x), \varphi_R \circ \rho(x))$
is a cyclically tight extension of $\mu$.
By Lemma~\ref{lem:balanced}, $d$ and $d'$ are congruent.
Next we show the only-if part.
Suppose that $d$ is 
congruent to a cyclically tight extension $d'$ of $\mu$.
Then there is $\alpha \in \RR^V$ such that
$d(x,y) - d'(x,y) = - \alpha(x) + \alpha(y)$ 
for $x,y \in V$.
Let $\rho: V \to \RR^{S^{cr}}$ be a map defined by
\[
\rho(x) = d'_x + \alpha(x) (\11,-\11) \quad (x \in V).
\]
Let $d''$ be the metric on $V$ defined as $d''(x,y) = D_{\infty}(\rho(x),\rho(y))$.
We claim $d'' \leq d$.
Indeed,  
we have $D_{\infty}(\rho(x),\rho(y)) 
= \max_{s \in S} ( d'(s,y) + \alpha(y) - d'(s,x) - \alpha(x) )_+
\leq d'(x,y) + \alpha(y) - \alpha(x) = d(x,y)$ (by triangle inequality).
If $\rho(V)$ is not balanced, 
then there is a cycle $C$ in $V$ 
such that $d'(C) < d''(C) \leq d(C)$.
This is a contradiction to $d'(C) = d(C)$.
Thus $\rho(V)$ is balanced and $d = d''$ holds.
\end{proof}

\subsection{Dimension criteria}\label{subsec:dimension}
Here we present matching-type criteria for 
dimensions of
tight span $T_{\mu}$ and tropical polytope $\bar Q_\mu$. 
For tropical polytopes, such a criterion 
has already been given by \cite{DSS05}.

For two $n$-element subsets $A^c \subseteq S^c$
and $B^r \subseteq S^r$,
let $K_{A^c,B^r}$ be the complete bipartite graph 
with the bipartition $(A^c,B^r)$.
A {\em matching} is a subset of edges 
which have no common vertices.
A matching $M$ is said to {\em perfect} if all vertices are covered by $M$.
For a matching $M$, let $\mu(M) = \sum_{a^cb^r \in M} \mu(a,b)$.
We consider the following maximum matching problems:
\begin{description}
\item[MT: ] Maximize $\mu(M)$ over all matchings $M$ in $K_{A^c,B^r}$.  
\item[PMT: ] Maximize $\mu(M)$ over all perfect matchings $M$ in $K_{A^c,B^r}$.  
\end{description}
Since $\mu$ is nonnegative, 
the maximum values of two problems are same.
Our interest is how the maximum is attained.
\begin{Thm}
For a positive integer $n$, 
the following two conditions are equivalent:
\begin{itemize}
\item[{\rm (a)}] $\dim T_{\mu} \geq n$.
\item[{\rm (b)}] 
 There exist two $n$-element subsets $A^c \subseteq S^c$
 and $B^r \subseteq S^r$
 such that the maximum of MT
is uniquely attained by a perfect matching $M$. 
\end{itemize}
\label{thm:dim}
\end{Thm}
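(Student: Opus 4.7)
The key tool is Lemma~\ref{lem:dim}, which translates (a) into the existence of a point $p \in T_\mu$ whose bipartite graph $K(p)$ has at least $n$ connected components containing no vertex $u$ with $p(u)=0$. For $(a)\Rightarrow(b)$ I would pick such a $p$, choose $n$ of these good components $(A_1^c,B_1^r),\ldots,(A_n^c,B_n^r)$, and select one edge $a_i^c b_i^r$ in each to form $A^c=\{a_1^c,\ldots,a_n^c\}$, $B^r=\{b_1^r,\ldots,b_n^r\}$, and $M^*=\{a_i^c b_i^r\}_{i}$. For any matching $M$ in $K_{A^c,B^r}$ the chain
\[
\mu(M)=\sum_{a^cb^r\in M}\mu(a,b)\leq\sum_{a^cb^r\in M}\bigl(p(a^c)+p(b^r)\bigr)\leq\sum_{u\in A^c\cup B^r}p(u)=\mu(M^*)
\]
holds, since each edge of $EK(p)$ satisfies its defining equation with equality and $M^*\subseteq EK(p)$ is perfect. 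If $M\neq M^*$, then either $M$ is not perfect (so some uncovered $u\in A^c\cup B^r$ has $p(u)>0$, making the second inequality strict), or $M$ is perfect but contains some edge $a_i^c b_j^r$ with $i\neq j$; since $a_i^c$ and $b_j^r$ lie in different components, this edge is absent from $EK(p)$ and the first inequality is strict. Either way $\mu(M)<\mu(M^*)$.

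For $(b)\Rightarrow(a)$ I plan to use LP duality. Consider
\[
\min\Bigl\{\sum_{u\in A^c\cup B^r}p(u)\;:\;p(a^c)+p(b^r)\geq\mu(a,b)\ \forall(a,b)\in A\times B,\ p\geq 0\Bigr\},
\]
which is dual to the maximum weight matching LP on $K_{A^c,B^r}$ and has optimal value $\mu(M^*)$. Let $D^*$ denote the set of optima. By total unimodularity, the primal LP has the unique optimum $\11_{M^*}$, so the Goldman--Tucker strict-complementarity theorem supplies $p^*\in D^*$ with $p^*(u)>0$ for every $u\in A^c\cup B^r$ and $p^*(a^c)+p^*(b^r)>\mu(a,b)$ whenever $(a,b)\notin M^*$. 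In the parametrization $\alpha_i=p(a_i^c)$, $\beta_i=\mu(a_i,b_i)-\alpha_i$, the polytope $D^*$ is cut out in $\RR^n$ by $0\leq\alpha_i\leq\mu(a_i,b_i)$ and $\alpha_i-\alpha_j\geq\mu(a_i,b_j)-\mu(a_j,b_j)$ for $i\neq j$, and $p^*$ lies in its relative interior; hence $\dim D^*=n$.

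To realize $D^*$ inside $T_\mu$, I would define $\iota:D^*\to T_\mu$ as follows. Extend each $p\in D^*$ to $p^{\,0}\in P_\mu$ by $p^{\,0}|_{A^c\cup B^r}=p$ and $p^{\,0}(u)=M$ for all other $u$, with $M$ chosen larger than $\max_{s,t}\mu(s,t)$ so that $p^{\,0}\in P_\mu$, and set $\iota(p)=\phi(p^{\,0})\in T_\mu$ via Lemma~\ref{lem:nonexpansive}. Since $\iota(p)\leq p^{\,0}$ and also $\iota(p)(a_i^c)+\iota(p)(b_i^r)\geq\mu(a_i,b_i)=p(a_i^c)+p(b_i^r)$, both inequalities $\iota(p)(a_i^c)\leq p(a_i^c)$ and $\iota(p)(b_i^r)\leq p(b_i^r)$ must hold with equality, giving $\iota(p)|_{A^c\cup B^r}=p$. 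Hence the linear projection $q\mapsto q|_{A^c\cup B^r}$ is a continuous left inverse of $\iota$ (continuity of $\phi$ comes from Remark~\ref{rem:continuity}), so $\iota$ is a continuous injection of the $n$-dimensional polytope $D^*$ into $T_\mu$. Consequently $\iota(D^*)$ has dimension $n$, which forces some cell of $T_\mu$ to have dimension at least $n$, giving $\dim T_\mu\geq n$.

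The main obstacle is the LP-duality part: deriving $\dim D^*=n$ from the uniqueness of $M^*$ via Goldman--Tucker, and then verifying that the retraction $\phi$ leaves the coordinates on $A^c\cup B^r$ unchanged. Once the injectivity of $\iota$ is established, the dimension conclusion follows from a standard piecewise-linear dimension count.
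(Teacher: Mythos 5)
Your proposal is correct and follows essentially the same strategy as the paper's proof sketch: direction (a)$\Rightarrow$(b) uses Lemma~\ref{lem:dim} to produce a point $p$ with $n$ good components in $K(p)$, selects one edge per component to form $M^*$, and derives uniqueness of the optimal matching from the feasibility/positivity of $p$; direction (b)$\Rightarrow$(a) invokes strict complementary slackness (Goldman--Tucker) to obtain a strictly positive dual optimum $p^*$ and lifts an $n$-dimensional set from $\RR^{A^c\cup B^r}$ into $T_\mu$. The only substantive deviation is in how the lift is organized: the paper works with $T_{\mu;A^c,B^r}$ (the restricted tight span), applies Lemma~\ref{lem:dim} to the face through $p^*$, and asserts that the projection of $T_\mu$ to $\RR^{A^c\cup B^r}$ contains this set by ``decreasing the remaining coordinates''; you instead isolate the dual optimal face $D^*$, parametrize it explicitly via $\alpha_i=p(a_i^c)$, and build a map $\iota=\phi\circ(\text{big extension})$ whose restriction to $A^c\cup B^r$ is the identity because the two one-sided inequalities $\iota(p)\le p^0$ and $\iota(p)(a_i^c)+\iota(p)(b_i^r)\ge\mu(a_i,b_i)$ squeeze to equality. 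Your version makes the paper's ``projection includes'' claim explicit and rigorous (the paper's phrase glosses over why lowering the outside coordinates cannot disturb the coordinates on $A^c\cup B^r$), at the cost of a slightly heavier topological step at the end; both arguments are sound.
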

\begin{Thm}[{\cite[Theorem 4.2]{DSS05}}]\label{thm:trop_dim}
For a positive integer $n$, 
the following two conditions are equivalent:
\begin{itemize}
\item[{\rm (a)}] $\dim \bar Q_\mu \geq n - 1$.
\item[{\rm (b)}] 
 There exist two $n$-element subsets $A^c \subseteq S^c$
 and $B^r \subseteq S^r$
 such that the maximum of PMT is uniquely attained.
\end{itemize}
\end{Thm}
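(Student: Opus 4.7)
For (a) $\Rightarrow$ (b), take $p \in T_\mu$ with $\dim F(p) \geq n$. By Lemma~\ref{lem:dim}, $K(p)$ has at least $n$ connected components $C_1,\dots,C_n$ in which no vertex $u$ has $p(u) = 0$; in particular each $C_i$ contains an edge $a_i^c b_i^r$. Put $A^c = \{a_1^c,\dots,a_n^c\}$, $B^r = \{b_1^r,\dots,b_n^r\}$ (both of size $n$ since the $C_i$ are disjoint) and $M^* = \{a_1^c b_1^r,\dots,a_n^c b_n^r\}$. For any matching $M$ in $K_{A^c, B^r}$,
\[
 \mu(M) \;\leq\; \sum_{a^c b^r \in M}\bigl(p(a^c)+p(b^r)\bigr) \;\leq\; \sum_{v \in A^c \cup B^r} p(v) \;=\; \mu(M^*),
\]
so $M^*$ attains the maximum of MT. Equality in the first bound forces every edge of $M$ to lie in $K(p)$, and equality in the second, combined with $p>0$ on $A^c \cup B^r$, forces $M$ to be perfect; such a perfect matching with $K(p)$-edges must pair each $a_i^c \in C_i$ with the only vertex of $B^r$ lying in $C_i$, namely $b_i^r$, so $M = M^*$.

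For the converse (b) $\Rightarrow$ (a), I would apply LP duality to the maximum-weight bipartite matching problem on $K_{A^c, B^r}$ with weights $\mu$. Since this LP is integral and $M^*$ is the unique integral optimum, Goldman--Tucker strict complementary slackness produces an optimal dual $y^*$ on $A^c \cup B^r$ with $y^*(v) > 0$ for every $v$ (because $M^*$ is perfect) and $y^*(a^c) + y^*(b^r) = \mu(a,b)$ holding precisely on the edges of $M^*$. The set of such $y^*$ is parametrized by $\lambda_i := y^*(a_i^c) \in (0,\mu(a_i,b_i))$ subject to the strict inequalities $\lambda_i - \lambda_j > \mu(a_i,b_j) - \mu(a_j,b_j)$ for $i \neq j$, and I would perturb $\lambda$ within this open polytope into \emph{generic position}, so that for each $t \notin B$ the maximum $\max_{s \in S}(\mu(s,t) - y^*(s^c))$ is uniquely attained (and symmetrically for each $s \notin A$), and so that no accidental equality $y^*(a_i^c) = \mu(a_i,t)$ or $y^*(b_i^r) = \mu(s,b_i)$ holds.

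Next, set $\tilde p = y^*$ on $A^c \cup B^r$ and $\tilde p(v) = K$ for $v \in S^{cr} \setminus (A^c \cup B^r)$, with $K$ large enough to ensure $\tilde p \in P_\mu$, and let $p := \phi(\tilde p) \in T_\mu$ via the nonexpansive retraction of Lemma~\ref{lem:nonexpansive}. The inequalities $p \leq \tilde p$ together with $p(a_i^c) + p(b_i^r) \geq \mu(a_i,b_i) = y^*(a_i^c) + y^*(b_i^r)$ force $p = y^*$ on $A^c \cup B^r$, and strict complementary slackness then gives $K(p) \cap K_{A^c,B^r} = M^*$. By the generic choice of $\lambda$, each vertex in $S^{cr} \setminus (A^c \cup B^r)$ at which $p$ is positive has a unique neighbor in $K(p)$, so the connected components of $K(p)$ each attach to at most one edge of $M^*$; hence $K(p)$ has at least $n$ connected components, one around each pair $\{a_i^c, b_i^r\}$, and the no-accidental-equality clause ensures all $p$-values in those components are strictly positive. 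Lemma~\ref{lem:dim} then yields $\dim F(p) \geq n$.

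The principal obstacle is the last step of the converse: after applying $\phi$ one must verify that the $n$ edges of $M^*$ end up in \emph{distinct} components of $K(p)$, all of whose vertices carry positive $p$-values. Strict complementary slackness of $y^*$ takes care of non-merging among $A^c \cup B^r$ itself, but possible merging through vertices of $S^{cr} \setminus (A^c \cup B^r)$ is precisely what the generic perturbation within the strict complementary slackness polytope is designed to preclude.
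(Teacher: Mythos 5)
Your proposal does not prove Theorem~\ref{thm:trop_dim}; as written it is an argument for Theorem~\ref{thm:dim}. Condition (a) of Theorem~\ref{thm:trop_dim} is $\dim \bar Q_\mu \geq n-1$ and (b) concerns unique attainment of PMT, the optimum over \emph{perfect} matchings only. But your forward direction begins from a point $p \in T_\mu$ with $\dim F(p) \geq n$ --- i.e.\ from $\dim T_\mu \geq n$, which is (a) of Theorem~\ref{thm:dim} --- and concludes that $M^*$ is the unique optimum of MT among \emph{all} matchings, which is (b) of Theorem~\ref{thm:dim}. Your converse applies strict complementary slackness to the MT linear program: the positivity $y^*(v)>0$ you deduce rests on the degree constraints being inequalities, so that their dual variables are sign-constrained; in the PMT dual those constraints are equalities, the dual variables are free, and strict complementary slackness gives no positivity at all. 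You then conclude $\dim F(p) \geq n$ for $p \in T_\mu$, which is again (a) of Theorem~\ref{thm:dim}, not $\dim \bar Q_\mu \geq n-1$.

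The two theorems are genuinely distinct. Although the MT and PMT maxima coincide (since $\mu\ge 0$), unique attainment in MT is strictly stronger than unique attainment in PMT: a unique PMT optimum containing a zero-weight edge yields a second MT optimum by dropping that edge. Correspondingly, $\dim T_\mu \geq n$ is strictly stronger than $\dim \bar Q_\mu \geq n-1$; for $S=\{s,t\}$ with $\mu(s,t)=1$, $\mu(t,s)=0$, one computes $\dim T_\mu = \dim \bar Q_\mu = 1$, so (a) of Theorem~\ref{thm:trop_dim} holds at $n=2$ while (a) of Theorem~\ref{thm:dim} fails. Note also that the paper does not prove Theorem~\ref{thm:trop_dim}; it cites \cite[Theorem 4.2]{DSS05}. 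To adapt your method to the correct statement, work in $Q_\mu$ rather than $T_\mu$: the affine span of a face of $Q_\mu$ through $p$ is $p + \sum_i x_i(\11_{A^c_i},-\11_{B^r_i})$ over \emph{all} connected components of $K(p)$, so its dimension equals the number of components with no positivity hypothesis on $p$, the quotient by $(\11,-\11)\RR$ yields the shift from $n$ to $n-1$, and the PMT dual with free variables removes the need to establish $y^*>0$. (Separately, for Theorem~\ref{thm:dim} the paper's sketch avoids your ``principal obstacle'' by observing that the coordinate projection of $T_\mu$ onto $\RR^{A^c\cup B^r}$ contains $T_{\mu;A^c,B^r}$, whence $\dim T_\mu \geq \dim T_{\mu;A^c,B^r} \geq n$ without any genericity argument.)
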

In \cite{DSS05, DS04}, 
the maximum integer $n$ with property (b) is called 
the {\em tropical rank} of (the distance matrix of) $- \mu$.
We give a proof sketch of Theorem~\ref{thm:dim};
this proof method has already been established 
in \cite{DSS05} and \cite[Appendix]{Hirai06AC}. 
\begin{proof}[Sketch of the proof of Theorem~\ref{thm:dim}]
(a) $\Rightarrow$ (b). Since $\dim T_{\mu} \geq n$, there exists 
$p \in T_{\mu}$ such that $K(p)$ has
$n$ components having no vertex $s$ with $p(s) = 0$ (Lemma~\ref{lem:dim}).
Let $M$ be a set of $n$ edges obtained 
from each of the $n$ components,
and let $A^c \subseteq S^c$ and $B^r \subseteq S^r$ 
be the sets of vertices of $M$.
By construction, $M$ is a perfect matching
of $K_{A^c,B^r}$.
Then $\mu(M) = 
\sum_{a \in A^c} p(a) + \sum_{b \in B^c} p(b)$.
Take any matching $M'$ in $K_{A^c,B^r}$.
Then 
$\mu(M') \leq \sum_{a \in A^c} p(a) + \sum_{b \in B^c} p(b)$
by $p(a) + p(b) \geq \mu(a,b)$.
So the equality is  attained exactly when $M = M'$.

(b) $\Rightarrow$ (a). 
With help of the bipartite matching theory~\cite[Part II]{SchrijverBook} and 
the strict complementary slackness theorem in linear programming, 
there is a positive vector $p^*: A^c \cup B^r \to \RR_{+}$ 
such that $p^*(a^c) + p^*(b^r) = \mu(a,b)$ for $a^cb^r \in M$ 
and $p^*(a^c) + p^*(b^r) > \mu(a,b)$ otherwise.
Therefore, by the same argument in Lemma~\ref{lem:dim}, 
the set $T_{\mu; A^c,B^r}$ 
of minimal elements of polyhedron $\{p \in \RR^{A^c \cup B^r}_+ 
\mid p(a^c) + p(b^r) \geq \mu(a,b) \  (a,b) \in A^c \times B^r \}$
has dimension at least $n$.
Any $p \in T_{\mu; A^c,B^r}$ can be
extended to $p: S^{cr}  \to \RR_{+}$ so that 
$p \in P_{\mu}$. 
Then decrease $p(u)$ for $u \not \in A^c \cup B^r$ so that $p \in T_{\mu}$.
Therefore the projection of $T_{\mu}$ to $\RR^{A^c \cup B^r}$
includes $T_{\mu; A^c,B^r}$. 
This implies $\dim T_{\mu} \geq \dim T_{\mu; A^c,B^r} \geq n$. 
\end{proof}

\section{Combinatorial characterizations of distances\\ with dimension one}
\label{sec:dim1}
A {\em tree metric} is a metric that can be represented as
the graph metric of some tree.
The {\em tree metric theorem}~\cite{Buneman74, Pereira69, Zareckii65} 
says that an undirected metric $\mu$ on a set $S$
is a tree metric if and only if it satisfies the
{\em four point condition}:
\[
\mu(s,t) + \mu(u,v) \leq \max \{\mu(s,u)+ \mu(t,v), \mu(s,v) + \mu(t,u)\}
\quad (s,t,u,v \in S).
\]
See also \cite[Chapter 7]{SempleSteel}.
Dress~\cite{Dress84} interpreted the four point condition 
as a criterion for one-dimensionality of 
the undirected tight span of $\mu$, 
and derived the tree metric theorem 
from the embedding property; 
see also \cite{Hirai06AC}.

In this section, 
we apply this idea to our directed versions of tight spans, 
and derive combinatorial characterizations of classes of distances
realized by oriented trees.
An {\em oriented tree} 
is a directed graph whose underlying undirected graph is a tree.
For an oriented tree $\mit\Gamma$
and a nonnegative edge-length $\alpha: E\mit\Gamma \to \RR_{+}$, 
we define a directed metric 
$D_{\mit\Gamma, \alpha}$ on $V\mit\Gamma$ as follows.
For two vertices $x,y \in V\mit\Gamma$, 
let $P[x,y]$ denote the set of edges 
forming a unique path connecting $x,y$
in the underlying undirected tree, 
and let $\vec P[x,y]$ be the set of edges in 
$P[x,y]$ whose direction is the same as 
the direction from $x$ to $y$; in particular 
$P[x,y] = \vec P[x,y] \cup \vec P[y,x]$ (disjoint union).
Then let $D_{\mit\Gamma, \alpha}(x,y) 
= \sum \{ \alpha(e) \mid e \in \vec P[x,y] \}$.
For a distance $\mu$ on $S$, an {\em oriented-tree realization} 
$({\mit\Gamma}, \alpha; \{F_s\}_{s \in S})$
consists of an oriented tree ${\mit\Gamma}$, 
a positive edge-length $\alpha: E{\mit\Gamma} \to \RR_{+}$, 
and a family $\{F_s \mid s \in S\}$ of subtrees 
in ${\mit\Gamma}$
such that 
\[
\mu(s,t) = D_{\mit\Gamma, \alpha}(F_s, F_t) \quad (s,t \in S),
\]
where a {\em subtree}
is a subgraph whose underlying undirected graph is connected, 
and $D_{\mit\Gamma, \alpha}(F_s, F_t)$ denotes 
the shortest distance from $F_s$ to $F_t$.
A {\em directed path} 
is an oriented tree each of whose vertices has at most one leaving edge 
and at most one entering edge.
The main results in this section are the following:
\begin{Thm}\label{thm:path}
For a directed distance $\mu$ on $S$, 
the following conditions are equivalent:
\begin{itemize}
\item[{\rm (a)}] $\mu$ has an oriented-tree realization $(\mit\Gamma, \alpha; \{F_s\}_{s \in S})$ 
                 so that ${\mit\Gamma}$ is a directed path.
\item[{\rm (b)}] $\dim T_{\mu} \leq 1$.
\item[{\rm (c)}] For $s,t,u,v \in S$ (not necessarily distinct), we have
\begin{equation*}
\mu(s,u) + \mu(t,v) \leq \max \{ \mu(s,v) + \mu(t,u),\ \mu(s,u),\ \mu(s,v),\ \mu(t,u),\ \mu(t,v)  \}.
\end{equation*}
\end{itemize}
\end{Thm}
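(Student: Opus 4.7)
We establish the equivalences by proving (a)~$\Rightarrow$~(c), (c)~$\Leftrightarrow$~(b), and (c)~$\Rightarrow$~(a), mirroring Dress's approach to the classical tree metric theorem.

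The implication (a)~$\Rightarrow$~(c) is a direct computation. Given a directed-path realization $({\mit\Gamma},\alpha;\{F_s\})$, parametrize each $F_s$ by its left and right endpoints $\alpha_s \leq \beta_s$ along the underlying line, and let $L$ be the cumulative edge-length function, so that $\mu(s,t) = (L(\alpha_t)-L(\beta_s))_+$. Writing $x = L(\alpha_u)-L(\beta_s)$, $y = L(\alpha_v)-L(\beta_t)$, $x' = L(\alpha_v)-L(\beta_s)$, $y' = L(\alpha_u)-L(\beta_t)$, one has the identity $x+y = x'+y'$, and the inequality in (c) becomes $x_+ + y_+ \leq \max\{x'_+ + y'_+, x_+, y_+, x'_+, y'_+\}$. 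A short case split on the signs of $x,y$ disposes of this: the only nontrivial case is $x,y>0$, and then either $x',y' \geq 0$ (so $x'_+ + y'_+ = x+y$), or else $x' < 0$ forces $y' = x+y-x' > x+y$, letting $y'_+$ dominate.

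The equivalence (c)~$\Leftrightarrow$~(b) follows from Theorem~\ref{thm:dim} applied with $n = 2$. Indeed, $\dim T_\mu \geq 2$ is equivalent to the existence of $2$-element subsets $A^c = \{s^c,t^c\}$ and $B^r = \{u^r,v^r\}$ such that the maximum of MT on $K_{A^c,B^r}$ is uniquely attained by a perfect matching, say $\{s^c u^r,\, t^c v^r\}$. The competing matchings are the other perfect matching $\{s^c v^r,\, t^c u^r\}$, of weight $\mu(s,v)+\mu(t,u)$, together with the four single-edge matchings of weights $\mu(s,u), \mu(s,v), \mu(t,u), \mu(t,v)$; uniqueness of $\{s^c u^r,\, t^c v^r\}$ is thus exactly the strict negation of (c) for the tuple $(s,t,u,v)$. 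Instances of (c) with coinciding arguments are automatically satisfied, so no content is lost.

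The implication (c)~$\Rightarrow$~(a) is the main obstacle. The plan is to extract combinatorial structure from (c) and build the realization directly. Applying (c) to $(s,t,t,s)$ forces $\mu(s,t)\mu(t,s) = 0$ for every pair, yielding a relation $s \prec t \iff \mu(s,t) > 0$; applying it to $(s,t,t,u)$ gives $\mu(s,t) + \mu(t,u) \leq \max\{\mu(s,u), \mu(s,t), \mu(t,u)\}$, which, when both legs are positive, forces $\mu(s,u) \geq \mu(s,t) + \mu(t,u)$ and in particular makes $\prec$ transitive. Fix a linear extension of the resulting partial order and place the endpoints $\alpha_s \leq \beta_s$ along this extension on the real line so that $\alpha_t - \beta_s = \mu(s,t)$ whenever $s \prec t$, with suitable overlaps when $\mu(s,t) = \mu(t,s) = 0$. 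The delicate point is that the relation $s \sim t \iff \mu(s,t) = \mu(t,s) = 0$ need not be transitive, so some $F_s$ must be genuine intervals rather than single points; here the choice of $\beta_s \geq \alpha_s$ is forced by witnesses $u$ with $s \sim u$ and $u \prec t$. Verifying that (c), applied to all remaining $4$-tuples, makes this assignment globally consistent is the technical heart of the argument.
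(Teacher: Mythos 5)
Your treatment of (a)~$\Rightarrow$~(c) is correct and arguably cleaner than the paper's: by parametrizing each $F_s$ by endpoints and noting that $\mu(s,t)=(L(\alpha_t)-L(\beta_s))_+$, you get the identity $x+y=x'+y'$ and a two-line case split, whereas the paper first reduces to singleton $F_s$ via the dimension criteria (which silently routes through $\text{(b)}\Leftrightarrow\text{(c)}$) before doing a similar computation on the line. Your (c)~$\Leftrightarrow$~(b) via Theorem~\ref{thm:dim} with $n=2$ is exactly the paper's argument, including the observation that tuples with coinciding arguments carry no content.

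The genuine gap is in (c)~$\Rightarrow$~(a). You propose to extract a partial order from (c) and place intervals directly on the line, and you correctly identify the obstacle yourself: the relation $s\sim t\iff\mu(s,t)=\mu(t,s)=0$ is not transitive, so the $F_s$ must be genuine intervals, and proving that all the constraints $\mu(s,t)=(L(\alpha_t)-L(\beta_s))_+$ together with $\alpha_s\leq\beta_s$ are simultaneously satisfiable is a nontrivial Helly-type consistency argument that you have not carried out. This is not a minor detail to be filled in; it is essentially the whole content of the direction, analogous to constructing a tree from the four-point condition in the classical setting. The paper avoids this entirely by instead proving (b)~$\Rightarrow$~(a): once $\dim T_\mu\leq 1$, the contractibility of $T_\mu$ (Remark~\ref{rem:continuity}) makes it a topological tree, each edge of the $1$-skeleton is linearly isometric to a segment in $(\RR,D_\infty^+)$ by the local coordinate description (\ref{eqn:coordinate}), and Proposition~\ref{prop:embedding}~(3) with $F_s:=T_{\mu,s}$ certifies that the resulting oriented graph realizes $\mu$. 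That $\mit\Gamma$ is a directed path is then a one-line contradiction with (c) applied to a quadruple $(s,t,t,s)$. In short, the tight span already \emph{is} the realization, so no separate construction is needed. Either adopt the paper's route $\text{(c)}\Rightarrow\text{(b)}\Rightarrow\text{(a)}$, or be prepared to prove a full consistency lemma for your interval placement; as written, the proof is incomplete.
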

\begin{Thm}\label{thm:tree}
For a directed distance $\mu$ on $S$, 
the following conditions are equivalent:
\begin{itemize}
\item[{\rm (a)}] $\mu$ has an oriented-tree realization 
$(\mit\Gamma, \alpha; \{F_s\}_{s \in S})$ 
so that each subtree $F_s$ is a directed path.
\item[{\rm (b)}] $\dim \bar Q_\mu \leq 1$. 
\item[{\rm (c)}] For $x,y,z,u,v,w \in S$ (not necessarily distinct), we have
\begin{eqnarray*}
&& \mu(x,u) + \mu(y,v) + \mu(z, w) \\[0.2em]
&& \leq 
\max \left\{   
\begin{array}{cc}
 \mu(x,v) + \mu(y,u) + \mu(z, w), & \mu(x,v) + \mu(y,w) + \mu(z, u), \\
 \mu(x,w) + \mu(y,u) + \mu(z, v), & \mu(x,w) + \mu(y,v) + \mu(z, u), \\
\mu(x,u) + \mu(y,w) + \mu(z, v)
\end{array}
\right\}, \\
\end{eqnarray*}
i.e., the tropical rank of $-\mu$ is at most $2$. 
\end{itemize}
\end{Thm}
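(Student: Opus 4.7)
The plan is to prove the three equivalences by dispatching (b)$\Leftrightarrow$(c) as a corollary of Theorem~\ref{thm:trop_dim} and then handling the two directions of (a)$\Leftrightarrow$(b) separately; the structural direction (b)$\Rightarrow$(a) will be the main obstacle.

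For (b)$\Leftrightarrow$(c), invoke Theorem~\ref{thm:trop_dim} with $n=3$: the statement $\dim\bar Q_\mu\le 1$ is the negation of ``some $3$-element $A^c,B^r$ yield a PMT-maximum attained by a unique perfect matching.''  Negating this across all such $3$-subsets says exactly that every $3\times 3$ submatrix of $-\mu$ is tropically singular, i.e.\ its tropical permanent is attained at least twice; writing out the six perfect matchings on three rows and three columns recovers the inequality in (c), which is the definition of tropical rank of $-\mu$ being at most $2$.

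For (a)$\Rightarrow$(b), I would map the oriented tree into $Q_\mu$.  Given $(\Gamma,\alpha;\{F_s\})$ with each $F_s$ a directed path, assign to every vertex $x\in V\Gamma$ the vector $p_x\in\RR^{S^{cr}}$ with $p_x(s^c)=D_{\Gamma,\alpha}(F_s,x)$ and $p_x(s^r)=D_{\Gamma,\alpha}(x,F_s)$, and interpolate linearly along the edges of $\Gamma$.  The triangle inequality of $D_{\Gamma,\alpha}$ gives $p_x\in{\mit\Pi}_\mu$, and applying the retraction $\phi$ of Lemma~\ref{lem:nonexpansive} followed by the cyclic retraction of Lemma~\ref{lem:cyclic_nonexpansive} moves the image into $Q_\mu^+$ without expanding dimension.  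Continuity of these retractions (Remark~\ref{rem:continuity}) yields a $1$-dimensional connected image in $\bar Q_\mu$; since this image contains every generator $\bar\mu_s$, it must equal $\bar Q_\mu$ by tropical convexity, whence $\dim\bar Q_\mu\le 1$.

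For (b)$\Rightarrow$(a), use Remark~\ref{rem:continuity} and $\dim\bar Q_\mu\le 1$ to conclude that $\bar Q_\mu$ is a contractible $1$-complex, hence a topological tree.  Lift to a balanced section $R\subseteq Q_\mu^+$, whose $1$-cells are segments of the form $\{p+\lambda(\11_{A^c},-\11_{B^r})\}$ by Lemma~\ref{lem:dim}; the sign pattern of each direction vector canonically orients the corresponding edge of $\bar Q_\mu$, with weight given by its $D_\infty$-length, yielding an oriented tree $\Gamma$.  For each $s\in S$ let $F_s$ be the image in $\bar Q_\mu$ of $T_{\mu,s}=R_s$ (Proposition~\ref{prop:embedding}(1)): it is a subtree of $\Gamma$ containing the projections of $\mu_s^{in}$ and $\mu_s^{out}$, and Lemma~\ref{lem:mutual}~(2) combined with the balancedness of $R$ forces the orientation along $F_s$ to be consistent, so it is a directed path.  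The desired equality $\mu(s,t)=D_\infty(\mu_s^{out},\mu_t^{in})=D_{\Gamma,\alpha}(F_s,F_t)$ then follows from Propositions~\ref{prop:embedding}(3) and~\ref{prop:geodesic}.  The delicate core of this argument is checking that the canonical orientations on the edges of $\bar Q_\mu$ assemble so that each $F_s$ is a directed path and not a branching subtree; balancedness of $R$ forcing $p(s^c)$ and $p(s^r)$ to vary monotonically in opposite senses along geodesics is the key tool, and the remaining combinatorial verification parallels Dress's characterization of undirected tree metrics via $1$-dimensional tight spans.
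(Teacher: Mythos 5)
The equivalence (b)$\Leftrightarrow$(c) and your outline of (b)$\Rightarrow$(a) are in the right spirit, but your argument for (a)$\Rightarrow$(b) has a genuine gap, and your lift in (b)$\Rightarrow$(a) needs care about which section you take.

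On (a)$\Rightarrow$(b): after retracting the potentials $p_x$ to $Q_\mu^+$ and projecting, you obtain a connected $1$-dimensional subset of $\bar Q_\mu$ passing through every $\bar\mu_s$ and then assert it must \emph{equal} $\bar Q_\mu$ ``by tropical convexity.'' This does not follow. The image of a cyclically nonexpansive retraction is not tropically convex in general, and a connected $1$-complex through the generators of a tropical polytope of dimension $\geq 2$ can easily sit strictly inside it (think of the $1$-skeleton of a tropical triangle running through all three apices). So the conclusion $\dim\bar Q_\mu\leq 1$ does not come from this map alone; the dimension of $\bar Q_\mu$ is a property of the whole polyhedron, not of a particular $1$-dimensional subset. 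The paper avoids this trap by an entirely different route: it first reduces to the case where every $F_s$ is a single vertex (replacing $\mu$ by a larger distance matrix $\mu'$ that contains $\mu$ as a submatrix, so that by the dimension criteria $\dim\bar Q_\mu\leq\dim\bar Q_{\mu'}$), observes that in this case $\mu'$ is a directed metric, then proves $\mu'$ is \emph{congruent} to an undirected tree metric by decomposing it into the split distances $\vec\delta_{A_e,B_e}$ and replacing each by its symmetrization, and finally invokes Develin--Sturmfels' result that tree metrics have tropical polytopes of dimension $\leq 1$ together with the fact that congruent metrics yield translated copies of $Q_\mu$. You are missing both the reduction to single-vertex subtrees and the passage through congruence; without them the argument does not close.

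On (b)$\Rightarrow$(a): the overall idea matches the paper, but you lift to ``a balanced section $R\subseteq Q_\mu^+$'' and then immediately speak of its $1$-cells and direction vectors via Lemma~\ref{lem:dim}. An arbitrary balanced section need not be a polyhedral subcomplex of $T_\mu$, so Lemma~\ref{lem:dim} (which describes faces of $T_\mu$) does not apply to it. The paper uses the specific section $R=\{p\in Q_\mu^+ : \forall\epsilon>0,\ p+\epsilon(\11,-\11)\notin Q_\mu^+\}$, which is simultaneously balanced \emph{and} a subcomplex of $T_\mu$, and this dual property is what makes the construction of $\Gamma$ work. Your heuristic for why each $F_s$ is a directed path (``balancedness of $R$ forcing $p(s^c)$, $p(s^r)$ to vary monotonically'') is plausible but not what the paper uses: the paper derives the contradiction from Proposition~\ref{prop:embedding}~(2) via the uniqueness of the entrance $\mu_s^{in}$ and exit $\mu_s^{out}$ within $T_{\mu,s}$, which rules out a branching vertex. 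You should either adopt that argument or spell out precisely how balancedness rules out branching.
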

\begin{proof}[Proof of Theorems \ref{thm:path} and \ref{thm:tree}]
In both theorems, 
(b) $\Leftrightarrow$ (c) follows from dimension criteria 
(Theorems~\ref{thm:dim} and \ref{thm:trop_dim}).
We prove (b) $\Rightarrow$ (a) 
from embedding property (Proposition~\ref{prop:embedding}).
Suppose first $\dim T_{\mu} \leq 1$. 
Then $T_{\mu}$ is a tree since $T_{\mu}$ is contractible 
(Remark~\ref{rem:continuity}).
Recall (\ref{eqn:coordinate}).
For each face (segment) $F = [p,q]$ in $T_{\mu}$, 
there are $A,B \subseteq S$
such that either $q - p = D_{\infty}(p,q) (\11_{A^c}, - \11_{B^r})$
or  $p - q = D_{\infty}(q,p) (\11_{A^c}, -\11_{B^r})$.
Therefore subspace $([p, q], D_{\infty})$
is isometric to the segment in $(\RR, D_{\infty}^+)$.
Then an oriented-tree realization $({\mit\Gamma}, \alpha; \{F_s\}_{s \in S})$ 
is obtained by the following way. 
The underlying undirected tree of ${\mit\Gamma}$ 
is the $1$-skeleton graph of $T_{\mu}$.
We orient each undirected edge $pq$ as
$p \rightarrow q$
when $D_{\infty}(p,q) > 0$ (and $D_{\infty}(q,p) = 0$).
The edge-length $\alpha(pq)$ 
is given by $\max \{ D_{\infty}(p,q), D_{\infty}(q,p)\}$.
Consider $T_{\mu,s}$, which is also the union of segments and 
is contractible (Remark~\ref{rem:Tmu_s}).
Let $F_s$ be its corresponding subgraph in ${\mit\Gamma}$, 
which is a subtree.
By Proposition~\ref{prop:geodesic},
the restriction of $(T_{\mu}, D_{\infty})$ 
to the set of the vertices of $T_{\mu}$
is isometric to $(V{\mit\Gamma}, D_{{\mit\Gamma}, \alpha})$.
By Proposition~\ref{prop:embedding}~(3), 
$\mu$ is realized by $({\mit\Gamma},\alpha; \{F_s\}_{s \in S})$.
Next we verify that $\mit\Gamma$ is a directed path.
Suppose to the contrary that there is a vertex having two entering edges 
or two leaving edges.
Then there exists $s,t \in S$ such that
the unique path joining $F_s$ and $F_t$ passes through these two edges. 
Then both $\mu(s,t)$ and $\mu(t,s)$ are positive.
This contradicts (c) 
for the case $(v,u) = (s,t)$.

Next suppose $\dim \bar Q_{\mu} \leq 1$. 
Consider the following section $R \subseteq Q_{\mu}^+$:
\[
 R = \{ p \in Q_{\mu}^+ \mid \forall \epsilon > 0, p + \epsilon (\11,-\11) \not \in Q_{\mu}^{+} \}.
\]
Then $R$ is balanced since it consists of points $p$ such 
that $p^r$ has a zero component.
Also $R$ is a subcomplex (the union of faces) of $T_{\mu}$. 
Therefore $R$ is also a tree each of whose 
segments is isometric to 
a segment in $(\RR, D_{\infty}^+)$.
Similarly, from the $1$-skeleton of $R$
we obtain an oriented-tree realization 
$({\mit\Gamma}, \alpha; \{F_s \}_{s \in S})$ of $\mu$.
We verify that each subtree $F_s$ is a directed path.
Suppose to the contrary
that there is a vertex $v$ in $F_s$
such that $v$ has two entering edges or two leaving edges in $F_s$.
Then we can take a point $p$ in $T_{\mu,s}$ 
such that $p(s^r)= D_{\infty}(p,T_{\mu,s}) = D_{\infty}(p, \mu_s^{in}) > 0$ or 
$p(s^c)= D_{\infty}(T_{\mu,s},p) = D_{\infty}(\mu_s^{out},p) > 0$, 
which contradicts $(p(s^c), p(s^r)) = (0,0)$ 
(Proposition~\ref{prop:embedding}~(2)).

Next we show (a) $\Rightarrow$ (b) or (c).
Suppose that $\mu$ is realized by $({\mit\Gamma}, \alpha; \{F_s\}_{s \in S})$.
It suffices to consider the case where
each $F_s$ is a single vertex.
Indeed, suppose that $F_s$ is 
a directed path of tail $v^-$ and head $v^+$. 
Let $S' = S \setminus s \cup \{s^-,s^+\}$, and let 
$F_{s^-}$ and $F_{s^+}$ be subtrees consisting of 
singletons $v^-$ and $v^+$, respectively.
Consider the new distance $\mu'$ on $S'$ 
by $\mu'(t,u) = D_{\mit\Gamma, \alpha}(F_t,F_u)$ for $t,u \in S'$.
Then the distance matrix of $\mu$ is 
obtained by deleting 
the $s^-$-th row and the $s^+$-th column from $\mu'$.
Namely $\mu$ is a submatrix of $\mu'$.
By dimension criteria, 
$\dim T_{\mu} \leq \dim T_{\mu'}$ and 
$\dim \bar Q_{\mu} \leq \dim \bar Q_{\mu'}$.
So suppose that each $F_s$ is a single vertex; 
in particular $\mu$ is a metric.
We first show (a) $\Rightarrow$ (c) in the first theorem.
Now $\mu$ can be regarded as $D^{+}_{\infty}$-distances among points $x_s$ 
in the real line $\RR$. 
Namely $\mu(s,t) = (x_t - x_s)_+$ for $s,t \in S$.
Take any $s,t,u,v \in S$.
We verify the condition in (c).
If $x_u \leq x_s$, 
then $\mu(s,u) = 0$ and $\mu(s,u) + \mu(t,v) = \mu(t,v)$.
So we may assume that $x_u > x_s$ and $x_v > x_t$.
It suffices to consider 
three cases (i) $x_s < x_u \leq x_t < x_v$, 
(ii)  $x_s \leq x_t < x_u \leq x_v$, and
(iii)  $x_s \leq x_t < x_v \leq x_u$.
(i) implies $\mu(s,u) + \mu(t,v) \leq \mu(s,v)$.
Both (ii) and (iii) imply $\mu(s,u) + \mu(t,v) = \mu(s,v) + \mu(t,u)$.

Finally
we show (a) $\Rightarrow$ (b) in the second theorem; 
we use an alternative approach to avoid case disjunctions.
We claim:
\begin{itemize}
\item[($*$)] $\mu$ is congruent to some tree metric $d$. 
\end{itemize}
Suppose that ($*$) is true.
Then there is $\alpha: S \to \RR$ such that
$\mu(s,t) = d(s,t) - \alpha(s) + \alpha(t)$ $(s,t \in S)$.
One can easily see that $Q_d$ is a translation of $Q_\mu$.
It is known that 
the dimension of the tropical polytope spanned 
by a tree metric
is at most $1$~\cite[Theorem 28]{DS04}.
Therefore $\dim \bar Q_\mu$ is also at most $1$.

We show the claim ($*$).
Now $\mu$ is realized by $({\mit\Gamma}, \alpha; \{F_s\}_{s \in S})$
with each subtree $F_s$ being a single vertex $v_s$.
For an edge $e$ in ${\mit\Gamma}$, 
let $(A_e,B_e)$ be the ordered bipartition of $S$
such that $s$ belongs to $A_e$ if and only 
if $v_s$ belongs to the connected component of ${\mit\Gamma} \setminus e$
containing the tail of $e$.
Let $\vec \delta_{A_e,B_e}$ be a directed metric on $S$ defined by
\[
 \vec \delta_{A_e,B_e}(s,t) = 
\left\{
\begin{array}{ll}
1 & {\rm if}\ (s,t) \in A_e \times B_e, \\
0 & {\rm otherwise},
\end{array}
\right. \quad (s,t \in S).
\]
By construction, we have
$
\mu = \sum_{e \in E\mit\Gamma} \alpha(e) \vec \delta_{A_e,B_e}.
$
Then one can easily see that $\vec \delta_{A_e,B_e}$ is 
congruent to  $(\vec \delta_{A_e,B_e} + \vec \delta_{B_e,A_e})/2$.
Here 
$\vec \delta_{A_e,B_e} + \vec \delta_{B_e,A_e}$ coincides with
the {\em split metric} of {\em split} (bipartition) $\{A_e,B_e\}$.
So $\mu$ is congruent to a nonnegative 
sum of split metrics for the pairwise {\em compatible} family
of splits
$\{ \{A_e,B_e\} \mid e \in E\mit\Gamma \}$, 
which is just a tree metric; see \cite{SempleSteel}.
\end{proof}
A directed metric $\mu$ is called 
a {\em directed tree metric} if it has
an oriented-tree realization $({\mit\Gamma}, \alpha; \{F_s\}_{s \in S})$
such that each subtree $F_s$ is a single vertex.
For a directed metric $\mu$, 
let $\mu^t$ denote the metric obtained by transposing $\mu$, 
i.e., $\mu^t(x,y) := \mu(y,x)$.
The following corollary sharpens~\cite[Theorem 28]{DS04}, and 
includes a nonnegative version of~\cite[Theorem 5]{Patrinos72}.
\begin{Cor}\label{cor:treemetric}
For a directed metric $\mu$ on $S$, the following conditions are equivalent:
\begin{itemize}
\item[{\rm (a)}] $\mu$ is a directed tree metric.
\item[{\rm (b)}] $\mu$ is congruent to a tree metric. 
\item[{\rm (c)}] $\mu + \mu^{t}$ satisfies the four point condition
and $\mu(x,y) + \mu(y,z) + \mu(z,x) = \mu(z,y) + \mu(y,x) + \mu(x,z)$ 
holds for every triple $x,y,z \in S$.
\item[{\rm (d)}] the tropical rank of $-\mu$ is at most $2$.
\end{itemize}
\end{Cor}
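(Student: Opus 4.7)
My plan is to organize the four-way equivalence as (a) $\Leftrightarrow$ (d) via Theorem~\ref{thm:tree} sharpened by the metric hypothesis, (a) $\Leftrightarrow$ (b) via the split-metric argument already developed inside the proof of Theorem~\ref{thm:tree}, and (b) $\Leftrightarrow$ (c) by a direct linear-algebraic manipulation based on the antisymmetric part $\mu - \mu^t$. Implication (a) $\Rightarrow$ (d) is immediate, since a single vertex is (trivially) a directed path, so Theorem~\ref{thm:tree} applies. For the reverse (d) $\Rightarrow$ (a), Theorem~\ref{thm:tree} gives an oriented-tree realization $({\mit\Gamma},\alpha;\{F_s\}_{s\in S})$ with each $F_s$ a directed path; tracing the construction, $F_s$ corresponds to $R_s$ in a balanced section $R \subseteq Q^+_\mu$, and Proposition~\ref{prop:embedding}~(1) identifies $R_s$ with $T_{\mu,s}$. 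The crucial observation, which is where the metric hypothesis is used, is Proposition~\ref{prop:embedding}~(4): for a directed metric, $T_{\mu,s}=\{\mu_s\}$. Hence each $F_s$ collapses to a single vertex, yielding (a).

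For (a) $\Rightarrow$ (b), I reuse the claim $(*)$ in the last part of the proof of Theorem~\ref{thm:tree}: decomposing $\mu = \sum_{e \in E{\mit\Gamma}} \alpha(e)\vec\delta_{A_e,B_e}$ and noting each directed split metric $\vec\delta_{A_e,B_e}$ is congruent to the symmetric split metric $(\vec\delta_{A_e,B_e}+\vec\delta_{B_e,A_e})/2$ exhibits $\mu$ as congruent to a nonnegative combination of split metrics over a pairwise compatible family, i.e.\ a tree metric. For (b) $\Rightarrow$ (d), if $\mu(x,y)=d(x,y)-\alpha(x)+\alpha(y)$ for a tree metric $d$, then $Q_\mu$ is a translate of $Q_d$, so $\bar Q_\mu = \bar Q_d$, which is at most one-dimensional by \cite[Theorem 28]{DS04}; equivalently, the tropical rank of $-\mu$ is at most $2$.

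The only step requiring fresh work is (b) $\Leftrightarrow$ (c). Set $\nu(x,y):=\mu(x,y)-\mu(y,x)$ and $d:=(\mu+\mu^t)/2$. The cyclic identity in (c) is exactly $\nu(x,y)+\nu(y,z)+\nu(z,x)=0$ for all triples. This is a cocycle condition on the antisymmetric function $\nu$: fixing $s_0\in S$ and letting $\alpha(x):=\nu(s_0,x)/2$, the triangle identity together with antisymmetry yields $\nu(x,y)=2\alpha(y)-2\alpha(x)$. The function $d$ is symmetric, nonnegative, and vanishes on the diagonal; its triangle inequality follows by averaging the triangle inequalities for $\mu$ and $\mu^t$. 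The four-point condition on $\mu+\mu^t$ is homogeneous, so it transfers to $d$, and by the classical tree metric theorem $d$ is a tree metric. Finally $\mu(x,y)-d(x,y)=\nu(x,y)/2=\alpha(y)-\alpha(x)$, which is exactly the congruence in (b). The converse (b) $\Rightarrow$ (c) is immediate: $\mu+\mu^t=2d$ inherits the four-point condition from $d$, and the two cycle lengths in the triangle identity differ only by telescoping $\alpha$-terms, which cancel.

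The principal obstacle, such as it is, lies in (d) $\Rightarrow$ (a): one must not merely invoke Theorem~\ref{thm:tree} but look inside its construction to see that the metric assumption upgrades directed-path subtrees to single vertices via Proposition~\ref{prop:embedding}~(4). Everything else is a short computation or a direct appeal to a result already available.
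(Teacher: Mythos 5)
Your proof is correct and follows essentially the same route as the paper's: reduce to Theorem~\ref{thm:tree} using Proposition~\ref{prop:embedding}~(4) to collapse each $F_s$ to a single vertex, invoke the split-metric congruence argument for (a) $\Rightarrow$ (b), use \cite[Theorem 28]{DS04} for (b) $\Rightarrow$ (d), and handle (b) $\Leftrightarrow$ (c) via the antisymmetric part $\mu-\mu^t$. The only cosmetic difference is that you explicitly construct the potential $\alpha(x)=\nu(s_0,x)/2$ from the three-cycle identity, whereas the paper appeals to the general fact that vanishing of $(\mu-\mu^t)$ on triangles propagates to all cycles; these are the same observation.
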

\begin{proof}
In the proof of (b) $\Rightarrow$ (a) in the previous theorems, 
if $\mu$ is a metric, then we can take $F_s$ as a single vertex 
since $T_{\mu,s}$ is a single point $\mu_s$ 
(Proposition~\ref{prop:embedding}~(4)).
So we obtain
(a) $\Rightarrow$ (b) $\Rightarrow$ (d) $\Rightarrow$ (a) 
from the proof above.
We verify (b) $\Leftrightarrow$ (c).
Decompose $\mu$ as $(\mu + \mu^{t})/2 + (\mu - \mu^{t})/2$.
Then the second condition in (c) is rephrased as 
$(\mu - \mu^{t})(C) = 0$ for every cycle $C$ consisting of three elements.
One can easily see that this is equivalent 
to $(\mu - \mu^{t})(C) = 0$ for every cycle $C$, i.e., 
$\mu$ is congruent to $(\mu + \mu^{t})/2$.
From this fact and the tree metric theorem, 
we obtain (b) $\Leftrightarrow$ (c).
\end{proof}

\section{Multicommodity flows}\label{sec:multiflows}
Originally 
our directed version of tight spans
was motivated by the multicommodity flow theory 
in combinatorial optimization; 
see \cite[Section 17]{AMO} and \cite[Part VII]{SchrijverBook}.
In this section, we briefly sketch 
how tight spans and tropical polytopes 
are applied to the study of multicommodity flows in directed networks;
see \cite{Hirai09, Kar98EJC} 
for applications of undirected tight spans to 
multicommodity flows in undirected networks.

By a {\em network} 
we mean a quadruple
$(V,E,S,c)$ consisting of 
a directed graph $(V,E)$, a specified vertex subset $S \subseteq V$, 
and a nonnegative integer-valued edge-capacity $c: E \to \ZZ_{+}$.
We call a vertex in $S$ a {\em terminal}.
A directed path $P$ in $(V,E)$ is called an {\em $S$-path} 
if $P$ connects distinct terminals in $S$.
A {\em multiflow} ({\em multicommodity flow})
is a pair $({\cal P}, \lambda)$ 
of a set ${\cal P}$ of $S$-paths and 
a nonnegative flow-value function $\lambda:{\cal P} \to \RR_{+}$
satisfying the capacity constraint:
\[
\sum \{ \lambda(P) \mid  P \in {\cal P}, \mbox{$P$ contains $e$}\} \leq c(e) \quad (e \in E).
\]
Let $\mu: S \times S \to \RR_{+}$ be a nonnegative 
weight defined on the set of all ordered pairs on terminals.
For a multiflow $f = ({\cal P},\lambda)$, 
its {\em flow-value} $\val (\mu, f)$ is defined 
by $\sum_{P \in {\cal P}} \mu(s_P,t_P) \lambda(P)$, 
where $s_P$ and $t_P$ denotes the starting vertex and the end vertex of $P$, 
respectively.
Namely $\mu(s,t)$ represents a value of a unit $(s,t)$-flow.
The $\mu$-weighted maximum multiflow problem 
is formulated as follows.
\begin{description}
\item[MCF: ] \quad Maximize $\val (\mu,f)$
 over all multiflows $f$ in $(V,E,S,c)$
\end{description}
This is one of fundamental problems in operations research, 
and has a wide range of practical applications; see~\cite[Section 17.1]{AMO}.

We regard $\mu$ as a directed distance on $S$.
For a simplicity, we assume that {\em $\mu$ is a metric}.
As is well-known in the multiflow theory \cite{Lomonosov85},
an LP-dual to (MCF) is a linear optimization over metrics:
\begin{eqnarray}\label{eqn:LP-dual}
\mbox{Minimize} && \sum_{xy \in E} c(xy) d(x,y) \\
\mbox{subject to} && \mbox{$d$ is a directed metric on $V$}, \nonumber \\
 && d(s,t) = \mu(s,t) \quad (s,t \in S). \nonumber
\end{eqnarray}
Here recall notions in Section~\ref{subsec:extension}.
This is nothing but 
a linear optimization over 
all extensions of $(S,\mu)$.
{\em Since $c$ is nonnegative, 
the minimum is always attained by a tight extension.}
By Theorem~\ref{thm:extension}~(1), 
possible candidates $d$ for optimum 
are isometrically embedded into 
$(T_{\mu},D_{\infty})$.
So the problem reduces to be 
an optimization over isometric embeddings $\rho$.
Thus we have the following min-max relation sharpening the LP-duality.
\begin{Thm}\label{thm:Tdual}
Let $(V,E,S,c)$ be a network and let $\mu$ be a directed metric on $S$.
The following min-max relation holds:
\begin{eqnarray}\label{eqn:min-max}
&& \max \{\; \val(\mu,f) \mid \mbox{$f$: multiflow in $(V,E,S,c)$} \} \\[0.5em]
&& = \min \left\{  \sum_{xy \in E} c(xy)D_{\infty}(\rho(x), \rho(y)) 
\bigmid  \rho: V \to T_\mu,\ \rho(s) = \mu_s\ (s \in S) \right\}. \nonumber
\end{eqnarray}
\end{Thm}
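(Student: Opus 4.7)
The plan is to derive the min-max relation by combining linear programming duality for (MCF) with the embedding theorem (Theorem~\ref{thm:extension}~(1)). As the authors already note, the LP dual (\ref{eqn:LP-dual}) gives
\[
 \max\{ \val(\mu, f) \mid f \text{ multiflow} \} = \min \left\{ \sum_{xy \in E} c(xy) d(x,y) \bigmid d \text{ extension of } \mu \right\},
\]
so the task reduces to showing that the right-hand minimum equals the minimum over maps $\rho: V \to T_\mu$ fixing $\rho(s) = \mu_s$.

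First I would argue that the LP minimum is attained at a \emph{tight} extension. This is the standard monotonicity argument: since $c \geq 0$, if $d$ is an extension and $d' \leq d$ is another extension, then $\sum c(xy) d'(x,y) \leq \sum c(xy) d(x,y)$. By Zorn's lemma (or by the fact that the feasible set of extensions is compact after restricting, say, to $d(x,y) \leq \max_{s,t} \mu(s,t) + \text{diameter bounds}$ induced by $\mu$), one can descend to a tight extension $d^{*}$ achieving the minimum.

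Next I would invoke Theorem~\ref{thm:extension}~(1) to translate tight extensions into isometric embeddings. Given the optimal tight extension $d^{*}$, the map $\rho^{*}: x \mapsto d^{*}_x$ is an isometric embedding of $(V,d^{*})$ into $(T_\mu, D_\infty)$ with $\rho^{*}(s) = \mu_s$ for $s \in S$. In particular,
\[
 \sum_{xy \in E} c(xy) d^{*}(x,y) = \sum_{xy \in E} c(xy) D_\infty(\rho^{*}(x), \rho^{*}(y)),
\]
so the minimum in (\ref{eqn:min-max}) is at most the LP minimum.

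For the reverse inequality, I would observe that \emph{any} map $\rho: V \to T_\mu$ with $\rho(s) = \mu_s$ produces a directed metric extension $d_\rho(x,y) := D_\infty(\rho(x), \rho(y))$: triangle inequality follows from the fact that $D_\infty$ is a directed metric on $\RR^{S^{cr}}$, and $d_\rho(s,t) = D_\infty(\mu_s, \mu_t) = \mu(s,t)$ for $s,t \in S$ by Proposition~\ref{prop:embedding}~(3). Hence every feasible $\rho$ gives a feasible extension with the same objective value, so the minimum in (\ref{eqn:min-max}) is at least the LP minimum. Combining the two inequalities with LP duality yields (\ref{eqn:min-max}).

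I do not expect any real obstacle: the LP duality is classical (via Lomonosov), reduction to tight extensions is a trivial monotonicity argument, and the two key directions are then a direct application of Theorem~\ref{thm:extension}~(1) and of Proposition~\ref{prop:embedding}~(3). The only point deserving a brief justification is the existence of the tight minimizer, which is where the nonnegativity of $c$ is used essentially.
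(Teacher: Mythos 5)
Your proof is correct and follows essentially the same route the paper sketches: LP duality (Lomonosov), descent to a tight extension using $c\ge 0$, and Theorem~\ref{thm:extension}~(1) to translate tight extensions into embeddings $x\mapsto d_x$ into $T_\mu$. The only thing you add beyond the paper's informal discussion is an explicit check of the reverse inequality---that an arbitrary $\rho:V\to T_\mu$ with $\rho(s)=\mu_s$ induces a feasible extension via Proposition~\ref{prop:embedding}~(3)--(4)---which the paper leaves implicit but which is indeed needed since the minimization in \eqref{eqn:min-max} ranges over all such $\rho$, not just isometric embeddings.
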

We give two interpretations of this minimization problem in 
RHS of (\ref{eqn:min-max}) below.
The first is a {\em facility location} on $T_{\mu}$.
There are facilities at points $\mu_s$ $(s \in S)$ in $T_{\mu}$.
We are going to locate new facilities $x \in V \setminus S$ 
at points $\rho(x)$ in $T_{\mu}$.
Here facilities communicate with each other, and
have the {\em communication cost}, 
which is a monotone function of 
their distances $D_{\infty}(\rho(x),\rho(y))$.
The objective is to find a location of the minimum communication cost.
In the literature of the location theory~\cite{TFL},  
the undirected version of this problem is known as 
{\em multifacility location problem}.
The second interpretation comes from electrical circuits.
We associate each vertex $x$ 
with a $T_{\mu}$-valued {\em potential} $\rho(x)$.
The objective is 
to minimize the {\em energy}, a function of
{\em potential differences} $D_{\infty}(\rho(x),\rho(y))$ 
measured in $(T_{\mu}, D_{\infty})$, 
under the {\em boundary condition} $\rho(s) = \mu_s$.

Suppose the case where network $(V,E,S,c)$ is {\em Eulerian},
i.e., for each vertex $x$, 
the sum of capacities over all edges entering $x$
is equal to the sum of capacities over all edges leaving $x$.
In this case, capacity function $c$ is decomposed 
into the characteristic vectors of 
(possibly repeated) cycles $C_1,C_2,\ldots,C_m$ in $(V,E)$.
For any metric $d$ on $V$, the objective value in (\ref{eqn:LP-dual}) 
is given by
\[
d(C_1) + d(C_2) + \cdots + d(C_m).
\]
Therefore, {\em the minimum of $(\ref{eqn:LP-dual})$
is always attained by a cyclically tight extension.}
Moreover two congruent metrics have the same objective value.
By Theorem~\ref{thm:extension}~(2)
and Proposition~\ref{prop:congruent}~(2), 
any metric $d$ congruent to a cyclically tight extension
is isometrically embedded into $(Q_{\mu}, D_{\infty})$.
Thus we have the following min-max relation: 
\begin{Thm}\label{thm:Qdual}
Let $(V,E,S,c)$ be an Eulerian network and let $\mu$ be a directed metric on $S$.
Then the following min-max relation holds:
\begin{eqnarray}\label{eqn:min-max_Eulerian}
&& \max \{\; \val(\mu,f) \mid \mbox{$f$: multiflow in $(V,E,S,c)$} \} \\[0.5em]
&& = \min \left\{  \sum_{xy \in E} c(xy)D_{\infty}(\rho(x), \rho(y))
\bigmid  \rho: V \to Q_{\mu},\ \bar \rho(s) = \bar \mu_s\ (s \in S) \right\}. \nonumber
\end{eqnarray}
\end{Thm}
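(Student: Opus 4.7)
My plan is to prove the two inequalities $\leq$ and $\geq$ separately.

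For $\leq$ (weak duality), I would start by observing that any $\rho: V \to Q_\mu$ with $\bar\rho(s)=\bar\mu_s$ pulls $D_\infty$ back to a directed metric $d'(x,y):=D_\infty(\rho(x),\rho(y))$ on $V$, and the central check is that $d'(s,t) \geq \mu(s,t)$ for all $s,t \in S$. Writing $\rho(s)=\mu_s+\alpha(s)(\11,-\11)$ (which $\bar\rho(s)=\bar\mu_s$ permits) and setting $\delta=\alpha(t)-\alpha(s)$, I would evaluate $(\rho(t)^c-\rho(s)^c)$ at the coordinate $s^c$ and $(\rho(s)^r-\rho(t)^r)$ at the coordinate $t^r$: using $\mu_s(s^c)=\mu_s(s^r)=0$, $\mu_s(t^c)=\mu(t,s)$, and $\mu_s(t^r)=\mu(s,t)$, these evaluations equal $\mu(s,t)+\delta$ and $\mu(s,t)-\delta$ respectively, so
\begin{equation*}
D_\infty(\rho(s),\rho(t)) \geq \max\{(\mu(s,t)+\delta)_+,(\mu(s,t)-\delta)_+\} \geq \mu(s,t).
\end{equation*}
Standard weak duality then applies: for any multiflow $f=({\cal P},\lambda)$,
\begin{equation*}
\val(\mu,f) = \sum_{P \in {\cal P}} \lambda(P)\mu(s_P,t_P) \leq \sum_P \lambda(P)\sum_{xy\in P} d'(x,y) \leq \sum_{xy \in E} c(xy) d'(x,y),
\end{equation*}
by the triangle inequality along each $P$ and the capacity constraints.

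For $\geq$, I would combine the LP-duality (\ref{eqn:LP-dual}) with the Eulerian hypothesis. Since $c$ is a nonnegative integer-valued circulation, a standard decomposition (cf.\ \cite{AMO}) gives $c = \sum_{i=1}^m \11_{C_i}$ for directed cycles $C_1,\dots,C_m$. Thus $\sum_{xy \in E} c(xy) d(x,y) = \sum_i d(C_i)$ for every directed metric $d$ on $V$, a quantity depending only on the congruence class of $d$; in particular, the LP minimum is attained at some cyclically tight extension $d^{**}$, as the text asserts just before the theorem. By Theorem~\ref{thm:extension}~(2), the map $\rho(x):=d^{**}_x$ isometrically embeds $V$ into $Q_\mu^+\subseteq Q_\mu$, with $\rho(s)=\mu_s$ and hence $\bar\rho(s)=\bar\mu_s$. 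Then $\sum_{xy} c(xy) D_\infty(\rho(x),\rho(y)) = \sum_{xy} c(xy) d^{**}(x,y)$ equals the maximum multiflow value, so this $\rho$ attains the minimum in (\ref{eqn:min-max_Eulerian}) and combined with weak duality yields the asserted identity.

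I expect the main obstacle to be the weak-duality inequality $d'(s,t) \geq \mu(s,t)$: because $\rho$ is not constrained to be an isometric embedding of some extension, and $\bar\rho(s)=\bar\mu_s$ only fixes $\rho(s)$ modulo $(\11,-\11)\RR$, the pulled-back metric $d'$ may disagree with $\mu$ on $S\times S$, and this inequality has to be extracted by hand from the explicit coordinates of $\mu_s$. Once in place, the remainder mirrors the tight-span proof sketch of Theorem~\ref{thm:Tdual}, with cyclic tightness, $Q_\mu$, and the congruence-invariance of cycle sums replacing tightness, $T_\mu$, and pointwise minimization.
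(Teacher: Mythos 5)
Your $\geq$ direction (decomposing $c$ into cycles, passing to a cyclically tight optimum, and embedding via Theorem~\ref{thm:extension}~(2)) is correct and matches the paper's sketch. The gap is in your weak-duality step. Your algebra is wrong: writing $\rho(s)=\mu_s+\alpha(s)(\11,-\11)$ and $\delta=\alpha(t)-\alpha(s)$, the evaluation $(\rho(t)^c-\rho(s)^c)(s^c)$ does equal $\mu(s,t)+\delta$, but so does $(\rho(s)^r-\rho(t)^r)(t^r)$; it is not $\mu(s,t)-\delta$. So you only get $D_\infty(\rho(s),\rho(t))\geq(\mu(s,t)+\delta)_+$, which can be strictly smaller than $\mu(s,t)$ when $\delta<0$. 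The pointwise claim $d'(s,t)\geq\mu(s,t)$ is genuinely false: take $S=\{s,t\}$ with $\mu\equiv 1$ off the diagonal, $\rho(s)=\mu_s$, $\rho(t)=\mu_t-2(\11,-\11)$. One checks $\rho(t)\in Q_\mu$ (its $K$-graph has no isolated vertex) and $D_\infty(\rho(s),\rho(t))=0<1=\mu(s,t)$. So the pullback $d'$ need not be an extension, and the standard path-by-path weak-duality computation you invoke cannot be run.

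What does survive, and what must replace it, is a \emph{cycle-sum} inequality. From $D_\infty(\rho(s_i),\rho(s_{i+1}))\geq(\mu(s_i,s_{i+1})+\delta_i)_+$ and the telescoping $\sum_i\delta_i=0$ one gets $d'(C)\geq\mu(C)$ for every cycle $C$ in $S$ (and the same example shows this is the sharpest thing available). That is enough, but only because the network is Eulerian: decompose $c=\sum_i\11_{C_i}$, so the objective is $\sum_i d'(C_i)$, a quantity that depends on $d'$ only through cycle sums. Then either (i) pass $\rho$ through a cyclically nonexpansive retraction $\varphi_R$ onto a balanced section (Lemma~\ref{lem:balanced}) to replace $d'$ by a smaller-objective $d''$ whose restriction to $S$ is congruent to $\mu$ — using that $\{\mu_s\}_{s\in S}$ is always balanced — and then correct $d''$ by a potential $\gamma:V\to\RR$ (a Bellman--Ford/feasible-potential argument) to obtain an honest extension of $\mu$ with the same objective; or (ii) argue directly that the difference-constraint system making $d'$ congruent to an extension is feasible because of $d'(C)\geq\mu(C)$. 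Either way, the Eulerian hypothesis is load-bearing in weak duality, not merely in producing a nice optimum; your current write-up uses it only for the $\geq$ side. The remainder of your argument (decomposition of $c$, cyclic tightness of the optimum, the embedding $\rho(x)=d^{**}_x$) is in line with the paper's intended route.
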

Here we can take $\rho$
so that $\rho(V)$ 
lies on any fixed balanced section $R \subseteq Q_{\mu}$.
So the RHS in (\ref{eqn:min-max_Eulerian}) 
is essentially a facility location problem 
on the tropical polytope $\bar Q_{\mu}$. 

In the single commodity maximum flow problem, 
a classic theorem by Ford-Fulkerson says 
that the maximum flow value is equal to the minimum value of cut capacity, 
and there always exists an {\em integral} maximum flow, i.e.,  
a maximum flow whose flow-value function is integer-valued; 
see \cite{AMO, SchrijverBook}.
Such a {\em combinatorial min-max theorem} and an {\em integrality theorem}
are closely related to the geometry of tight spans and tropical polytopes. 
This issue, however, is beyond the scope of the paper.
So we will discuss it in the next paper~\cite{HK}.

\section*{Acknowledgments}
The first author is supported by 
a Grant-in-Aid for Scientific Research 
from the Ministry of Education, Culture, Sports,
Science and Technology of Japan.
The second author is supported by 
Nanzan University Pache Research Subsidy I-A-2
for the 2008 academic year.

\end{document}